\documentclass[12pt,a4paper]{amsart}
\usepackage[T1]{fontenc}
\usepackage[utf8]{inputenc}
\usepackage{amsmath,amsfonts,amssymb,amscd}
\usepackage{mathrsfs}
\usepackage{float,enumitem}
\usepackage{hyperref}
\usepackage{ifthen}
\usepackage{tikz}
\usepackage{tikz-cd}
\usepackage{graphicx}

\DeclareFontFamily{U}{wncy}{}
\DeclareFontShape{U}{wncy}{m}{n}{<->wncyr10}{}
\DeclareSymbolFont{mcy}{U}{wncy}{m}{n}
\DeclareMathSymbol{\Sha}{\mathord}{mcy}{"58}

\newcommand{\eps}{\varepsilon}

\newcommand{\bs}{\backslash}

\newcommand{\vol}{\operatorname{vol}}

\newcommand{\tr}{\operatorname{tr}}

\newcommand{\ad}{\operatorname{Ad}}

\newcommand{\interval}[4]{
  \ifthenelse{ \equal{#1}{o} } {\mathopen{]}} {\mathopen{[}}
  #2, #3
  \ifthenelse{ \equal{#4}{o} } {\mathclose{[}} {\mathclose{]}}
}

\newcommand{\T}{\mathbf{T}}
\renewcommand{\H}{\mathbf{H}}

\newcommand{\G}{\mathbf{G}}

\newcommand{\SL}{\mathrm{SL}}
\newcommand{\SU}{\mathrm{SU}}
\newcommand{\SO}{\mathrm{SO}}

\newcommand{\GL}{\mathrm{GL}}

\newcommand{\bG}{\mathbf{G}}
\newcommand{\bH}{\mathbf{H}}
\newcommand{\bN}{\mathbf{N}}

\newcommand{\CC}{\mathbb C}
\newcommand{\RR}{\mathbb R}
\newcommand{\ZZ}{\mathbb Z}
\newcommand{\HH}{\mathbb H}

\newcommand{\QQ}{\mathbb Q}

\newcommand{\InjRad}{\operatorname{InjRad}}

\newcommand{\B}{\mathrm{B}}

\newtheorem{theorem}{Theorem}[section]
\newtheorem{lemma}[theorem]{Lemma}
\newtheorem{corollary}[theorem]{Corollary}
\newtheorem{proposition}[theorem]{Proposition}

\newtheorem{remark}[theorem]{Remark}

\newtheorem{theostar}{Theorem}

\numberwithin{equation}{section}

\title[Homotopy type]{Topological complexity of arithmetic locally symmetric spaces}

\author{Miko\l{}aj Fr\k{a}czyk}
\address{Jagiellonian University, Faculty of Mathematics and Computer Science, ul. {\L}ojasiewicza 6, 30-348 Krak{\'o}w, Poland }
\email{mikolaj.fraczyk@uj.edu.pl}
\author{Sebastian Hurtado}
\address{Yale University, Department of Mathematics, 10 Hillhouse Ave, New Haven, CT 06511}
\email{sebastian.hurtado-salazar@yale.edu}
\author{Jean Raimbault}
\address{Institut de Mathématiques de Marseille, UMR 7373, CNRS, Aix-Marseille Université}
\email{jean.RAIMBAULT@univ-amu.fr}

\subjclass[2010]{22E40, 11F06, 11F75, 57Q15}

\begin{document}
\maketitle
\begin{abstract}
 We prove that any arithmetic locally symmetric space is  homotopy equivalent to a simplicial complex where the number of simplices is bounded linearly in the volume of the space. This settles a well-known conjecture of Gelander. The main technical ingredient, which is of independent interest, is a strengthened version of the Margulis' collar lemma for arithmetic locally symmetric spaces based on the height gap theorem of Breuillard, in which the Margulis constant is made linear in the degree of the trace field of the lattice. 
\end{abstract}

\setcounter{tocdepth}{1}
\tableofcontents
\setcounter{tocdepth}{2}
\pdfbookmark[0]{Table of contents}{tableofcontents}
\section{Introduction}

\subsection{Topological complexity} Let $G$ be a semi-simple real Lie group with finite center, and let $X$ be the associated symmetric space where $G$ acts by isometries. We examine the relation between the topological complexity of finite volume manifold quotients $\Gamma \bs X,$ where $\Gamma$ is a torsion-free lattice in $G$, and their Riemannian volume $\vol(\Gamma\bs X).$ The idea that the volume should control the topology of locally symmetric spaces can be traced back to Gauss-Bonnet's formula which, in the case of compact hyperbolic surfaces, implies that the volume completely determines the topology. 

We cannot hope for such a strong relation in general. Still, one can ask how does the volume control the topological complexity of the space, which for example can be measured by estimating the minimal number of simplices in a triangulation of the space. The Gelander conjecture \cite[Conjecture 1.3]{Gelander1} states that any locally symmetric manifold of finite volume should be homotopy equivalent to a bounded degree simplicial complex where the number of simplices is bounded by the volume, thereby predicting a linear relation between the volume and the topological complexity. 

For non-compact arithmetic manifolds Gelander proved his conjecture in the very paper where he stated it \cite[Theorem 1.5(1)]{Gelander1}.  For compact arithmetic manifolds, it is well-known that a positive answer to Lehmer's problem on Mahler measures would immediately imply the conjecture. However, Lehmer's question remains wide open to this day, so the success of this approach is conditional on a major breakthrough in number theory (assuming the answer is indeed positive). As a consolation, Dobrowolski's bounds on Mahler measure imply a weaker version of the conjecture, as proven in \cite{gelander2021bounds}. For compact locally symmetric manifolds not covered by $\mathbb H^3, \mathbb H^2\times \mathbb H^2$ or $\SL_3(\mathbb R)/\SO(3)$(not necessarily arithmetic) a weak version of the Gelander conjecture is proven in \cite[
Theorem 11.2]{Gelander2}: the fundamental group is isomorphic to the fundamental group of a simplicial complex of complexity bounded as in the conjecture.

For not necessarily arithmetic locally symmetric spaces (and more generally manifolds of non-positive curvature) there are results towards Gelander's conjecture, which establish bounds for the homotopy type of the thick part. For manifolds of negative curvature a very strong result in this direction is given in \cite[Theorem 1.4]{Bader_Gelander_Sauer}. While it cannot settle the conjecture in full, this is strong enough to yield a lot of information on the topology of the manifold itself. However, the extension of the methods of \cite{Bader_Gelander_Sauer} to higher-rank symmetric spaces seems to present outstanding difficulties, as discussed in \cite[Section 1.5]{Bader_Gelander_Sauer}. 

The first author proved Gelander's conjecture for compact arithmetic hyperbolic 2-- and 3--manifolds in \cite{Fraczyk}, as a by-product of effective bounds of the volume of the thin part, which in turn followed from careful estimates on the geometric side of Selberg's trace formula. The trace formula approach presents substantial technical difficulties in the general case as the isometry group of the symmetric space grows more complicated, which was partly the reason why \cite{Fraczyk} dealt only with hyperbolic $2$-- and $3$--manifolds. In this work, we introduce a new method to control the thin part of completely general compact arithmetic orbifolds, allowing us to bound the volume of the thin part essentially bypassing the use of trace formula. Since the non-compact case was already handled in \cite{Gelander1}, these bounds enable us to provide a full solution to Gelander's conjecture for all arithmetic locally symmetric spaces.

\begin{theostar} \label{Gelander_conj}
  There are constants $A,B$ dependent only on the symmetric space $X$, such that every closed arithmetic manifold $M=\Gamma\bs X$ is homotopy equivalent to a simplicial complex $\mathcal N$ with at most $A \vol(M)$ simplices, where every vertex is incident to at most $B$ simplices. 
\end{theostar}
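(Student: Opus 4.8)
The plan is to build the simplicial complex $\mathcal{N}$ by a thick--thin decomposition of $M = \Gamma\bs X$ together with a nerve-type construction, controlling the combinatorics via the volume. First I would fix a Margulis constant $\eps = \eps(X)$ and decompose $M = M_{\geq\eps} \cup M_{<\eps}$ into its $\eps$-thick and $\eps$-thin parts. On the thick part one has a uniform lower bound on the injectivity radius, so a standard argument (cf. Gelander's original approach to bounding the homotopy type of thick manifolds) produces an $\eps$-net whose cardinality is $O_X(\vol M_{\geq\eps}) = O_X(\vol M)$, and whose Vietoris--Rips/nerve complex at the appropriate scale is homotopy equivalent to $M_{\geq\eps}$, with vertex degrees bounded in terms of a packing constant depending only on $X$. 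The key point here is that balls of radius $\eps$ in $M_{\geq\eps}$ embed isometrically into $X$, so local geometry is model geometry and degrees are uniformly bounded.

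Next I would analyze the thin part. For a general locally symmetric space the thin part can be topologically complicated, but for \emph{arithmetic} $\Gamma$ the refined Margulis lemma alluded to in the introduction (the corollary of Breuillard's height gap theorem) restricts the structure of the almost-commuting subgroups generated by short elements: each component of $M_{<\eps}$ is a nilmanifold bundle (or, in the finite-volume case, a cusp neighborhood) over a lower-dimensional base, and crucially is homotopy equivalent to a complex whose size is controlled. Here the quantitative bound on the volume of the thin part from Theorem~\ref{Main_quant_BSconv} enters: it guarantees that, after possibly passing to a slightly smaller $\eps$, the thin part decomposes into $O_X(\vol M)$ "elementary" pieces each of bounded complexity. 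I would triangulate each such piece with $O_X(1)$ simplices of bounded degree, again using that the transverse geometry is modeled on a fixed list of possibilities depending only on $X$.

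The final step is to glue: the thick and thin complexes meet along their common boundary, which is itself covered by $O_X(\vol M)$ bounded-complexity pieces (boundaries of thin components, and their collars in the thick part). Using a relative nerve lemma, or a Mayer--Vietoris-style homotopy colimit argument, one assembles $\mathcal{N}$ from these pieces; subdividing a bounded number of times if necessary to make the gluing simplicial keeps both the total simplex count linear in $\vol M$ and the vertex degrees bounded by a constant $B = B(X)$. I expect the main obstacle to be the thin part in the presence of torsion and, especially, in the non-uniform (finite-volume) case: controlling the homotopy type of cusp neighborhoods and of thin tubes around short geodesics uniformly requires the arithmetic input in an essential way, and making the gluing along non-compact boundary pieces simplicial while preserving the linear bound is the delicate combinatorial heart of the argument. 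The rank-one non-cocompact case, where cusps are genuinely present, is where the quantitative thin-part estimate does the most work.
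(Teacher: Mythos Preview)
Your thick--thin strategy is the classical Gelander approach, but it is \emph{not} what the paper does, and your version of it has a gap. You claim that Theorem~\ref{Main_quant_BSconv} guarantees the thin part decomposes into $O_X(\vol M)$ ``elementary pieces each of bounded complexity''. But Theorem~\ref{Main_quant_BSconv} is purely a \emph{volume} estimate: it says $\vol(M_{\le \eta[k:\QQ]}) \le e^{-c[k:\QQ]}\vol(M)$. A volume bound does not by itself give you a bound on the number of thin components, nor a structure theorem for each component, nor a uniform bound on the homotopical complexity of each piece. Turning the volume bound into the structural statement you need would require exactly the kind of detailed analysis of thin tubes and their collars that you describe only in outline---and that is where the real work in the classical approach lies. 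You have not explained why the arithmetic Margulis lemma buys you that structure at bounded cost per piece.

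The paper sidesteps all of this. Rather than treating thick and thin parts separately, it builds a \emph{single} good cover of all of $M$ by balls $B(p,\eps_p/6)$ whose radii vary with the local injectivity radius $\eps_p = \min(1,\InjRad_M(p))$. The centers form a set that is dense and separated at the local scale (Lemma~\ref{goodballs}), and one checks directly that the nerve of this cover is homotopy equivalent to $M$ with vertex degree bounded by a packing constant. The count of centers is bounded by $\int_M \eps_x^{-\dim X}\,dx$ (Proposition~\ref{simplicial_general}), and this integral is what the arithmetic input controls: on $M_{\ge 1}$ the integrand is $1$; on $M_{\le 1}$ one bounds it crudely by $\InjRad(M)^{-\dim X}\vol(M_{\le 1})$, and then Dobrowolski's lower bound $\InjRad(M) \gg (\log[k:\QQ])^{-3}$ together with the exponential decay $\vol(M_{\le 1}) \le e^{-c[k:\QQ]}\vol(M)$ from Theorem~\ref{Main_quant_BSconv} makes the product $O_X(\vol M)$. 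No topological analysis of the thin part is ever needed---only that balls of radius $\eps_p/6$ are embedded and convex, so their intersections are contractible. This is both shorter and avoids precisely the ``delicate combinatorial heart'' you anticipated.
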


An important consequence of this result is the following estimate on torsion homology.

\begin{theostar} \label{torshom}
  There is a constant $C$ depending only on $X$ such that for any arithmetic locally-$X$ manifold $M=\Gamma\bs X$ of finite volume and for $0 \le i \le \dim(X)$ we have 
  \[
  \log |H_i(M, \ZZ)_{\mathrm{tors}}| \le C \vol(M)
  \]
  where for an abelian group $A$ we denote by $A_{\mathrm{tors}}$ its torsion subgroup. 
\end{theostar}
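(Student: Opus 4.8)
The plan is to deduce Theorem~\ref{torshom} from Theorem~\ref{Gelander_conj} by the standard linear-algebra bound on the torsion in the homology of a finite simplicial complex in terms of its number of simplices. First I would apply Theorem~\ref{Gelander_conj} (available since $M$ is assumed to be a manifold) to obtain a finite simplicial complex $\mathcal N$, homotopy equivalent to $M$, having at most $N:=A\vol(M)$ simplices in total and with every vertex incident to at most $B$ simplices, where $A,B$ depend only on $X$. Since singular homology is a homotopy invariant we have $H_i(M,\ZZ)\cong H_i(\mathcal N,\ZZ)$ for every $i$, so it suffices to bound $\log\lvert H_i(\mathcal N,\ZZ)_{\tors}\rvert$.

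Next I would turn the degree bound into a bound on $\dim\mathcal N$. A $k$-simplex of $\mathcal N$ has $2^{k+1}-1$ nonempty faces, all of which are simplices of $\mathcal N$, and each of its vertices is incident to the $2^k$ of them that contain it; hence $2^k\le B$ and $\dim\mathcal N\le D:=\lfloor\log_2 B\rfloor$. Consequently the simplicial boundary operator $\partial_{i+1}\colon C_{i+1}(\mathcal N,\ZZ)\to C_i(\mathcal N,\ZZ)$ is represented, in the bases of simplices, by an integer matrix with at most $N$ columns, each column having at most $D+1$ nonzero entries --- the incidence coefficients of an $(i+1)$-simplex with its $i$-dimensional faces --- all equal to $\pm1$.

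I would then invoke the Smith normal form: writing $d_1\mid\cdots\mid d_r$ for the nonzero elementary divisors of $\partial_{i+1}$, where $r=\rk\partial_{i+1}$, one gets $H_i(\mathcal N,\ZZ)_{\tors}\cong\bigoplus_{j=1}^{r}\ZZ/d_j\ZZ$, so $\lvert H_i(\mathcal N,\ZZ)_{\tors}\rvert=d_1\cdots d_r$, which is the greatest common divisor of the $r\times r$ minors of $\partial_{i+1}$ and hence at most the largest of their absolute values. By Hadamard's inequality applied to the columns, every $r\times r$ minor of $\partial_{i+1}$ has absolute value at most $(D+1)^{r/2}\le(D+1)^{N/2}$, so
\[
\log\lvert H_i(M,\ZZ)_{\tors}\rvert\ \le\ \frac N2\log(D+1)\ =\ \frac{A\log(D+1)}{2}\,\vol(M),
\]
which is the claimed bound with $C:=\tfrac A2\log\bigl(\lfloor\log_2 B\rfloor+1\bigr)$, a constant depending only on $X$.

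Given Theorem~\ref{Gelander_conj}, this deduction is routine and contains no serious obstacle; the one point that requires attention is that one must use the bound on the vertex degree, not merely the bound on the total number of simplices, in order to bound the number of $i$-faces of an $(i+1)$-simplex by a constant. Without this, Hadamard's inequality would only yield the weaker estimate $\log\lvert H_i(M,\ZZ)_{\tors}\rvert=O\bigl(\vol(M)\log\vol(M)\bigr)$, and the linearity in the volume would be lost.
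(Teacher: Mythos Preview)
Your deduction is correct and is precisely the standard argument the paper has in mind: it does not write out its own proof of Theorem~\ref{torshom} but refers to \cite[Section~5.2]{Bader_Gelander_Sauer} for exactly this kind of Smith--normal--form/Hadamard bound applied to the complex produced by Theorem~\ref{Gelander_conj}. Your observation that the vertex-degree bound (not just the simplex count) is what makes the estimate linear rather than $O(\vol(M)\log\vol(M))$ is the right emphasis.
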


Similar estimates for Betti numbers were well-known in the setting of locally symmetric spaces by the work of Ballmann--Gromov--Schroeder \cite{BallGro} but these analogous bounds for torsion are new, especially in the higher rank cases. The implication from Theorem \ref{Gelander_conj} to Theorem \ref{torshom} is standard: see for instance \cite[Section 5.2]{Bader_Gelander_Sauer} which outlines a general argument deducing the bounds on homology from the existence of a triangulation with bounded degrees. 

We note that in the case where $X \neq \mathbb H^3$ is negatively curved, the results of \cite[Theorem 1.2]{Bader_Gelander_Sauer} are sufficient to imply an estimate on torsion homology similar to that of Theorem \ref{torshom}.

\subsection{Arithmetic Margulis lemma.} The key new ingredient in our arguments is the following extension of the classical Margulis lemma. In what follows $d\colon X\times X\to\mathbb R_{\geq 0}$ is the Riemannian distance on the symmetric space $X$. 
\begin{theostar}\label{MargLemmaIntro}
There exists a constant $\varepsilon=\varepsilon_X$ with the following property. Let $\Gamma\subset G$ be an arithmetic lattice and let $k=\mathbb Q(\{\tr {\rm Ad}(\gamma)\,|\, \gamma\in \Gamma\})$ be the adjoint trace field of $\Gamma$ (see \ref{arithlattices}). For any $x\in X$ the group generated by $\{\gamma\in \Gamma\,|\, d(x,\gamma x)\le [k:\mathbb Q]\varepsilon\}$ is virtually nilpotent.
\end{theostar}
Note that this lemma effectively allows us to control the $[k:\mathbb Q]\varepsilon-$thin part of the quotient $\Gamma\bs X,$ while the traditional version of Margulis lemma handles only the $\varepsilon$-thin part. Theorem \ref{MargLemmaIntro} can be proved fairly quickly using Breuillard's height gap theorem \cite{Breuillard} (See \cite{chen2021height} for a recent short proof). Despite the shortness of the argument, the present work (as far as authors know) is the first time Breuillard's height gap was used in the study of locally symmetric spaces. Since the first version of this work appeared, the arithmetic Margulis lemma was used in  \cite{fisher2022new}, \cite{raimbault2022coxeter} to give simple proofs of the finiteness of arithmetic reflection reflection groups and we expect new applications to follow.   

To construct the simplicial complex in Theorem \ref{Gelander_conj} we use the same technique as in \cite{Gelander1}, namely taking the nerve of a good cover by balls; the proof consists in estimating the number of centers we need for such a cover in terms of the volume of $M = \Gamma \bs  X$. This set of centers is going to be a sufficiently dense and sufficiently separated subset of points in $M$, the difficulty is that the density and separation must depend on the local injectivity radius. However a straightforward argument shows that for a compact manifold we can always use a set with $O_\eps\left(\vol(M) + \int_{M_{\le \eps}} \InjRad_M(x))^{-\dim(M)} dx \right)$ points (where $\eps$ is any positive number). To guarantee that this is $O(\vol (M))$ we need estimates for the volume of the thin part\footnote{if $R > 0$, we denote the $R$-thin part by $M_{\le R}$, which is the set of points where the injectivity radius is smaller than $R$.} $M_{\le \eps}$ of an arithmetic locally symmetric space in terms of the degree of its field of definition. This is given by the following result. This is given by the following result. We note that while the construction of the good cover is only done here for manifolds and not orbifolds, the arithmetic Margulis lemma and the results below apply to all arithmetic lattices, torsion-free or not. 

\begin{theostar} \label{Main_thinpart}
  Let $G$ be a semisimple Lie group with associated symmetric space $X$. There are positive constants $c=c_X,\eta=\eta_X$ such that any irreducible arithmetic lattice $\Gamma\subset G$ with trace field $k$ satisfies 
  \[
  \vol((\Gamma\bs X)_{\le \eta[k:\QQ]}) \le \vol(\Gamma\bs X)e^{-c[k:\QQ]}.
  \]
\end{theostar}


Theorem \ref{Main_thinpart} is proved using the Arithmetic Margulis Lemma and new estimates on orbital integrals (which might be of independent interest) that allow us to estimate the growth of the volume of neighborhoods of thin collars. 

\begin{theostar}\label{thm-OEintro}
  Let $G$ be a semisimple Lie group with finite center and let $\Gamma$ be a uniform lattice in $G$. There exists  $C, \delta > 0$ (depending only on $G$) such that for every $R_1, R_2 \geq 0$ and every non-central $\gamma \in \Gamma$: 
  \[
  \int_{G_\gamma\backslash G}1_{B(R_1)}(g^{_1}\gamma g)dg < C e^{-\delta R_2} \int_{G_\gamma\backslash G}1_{B(R_1 + R_2)}(g^{-1}\gamma g)dg, 
  \] where $G_\gamma$ is the centralizer of $\gamma$ in $G$. 
\end{theostar}

The proof of Theorem \ref{thm-OEintro} is based on the existence of a uniform spectral gap for the family of regular representation representations $L^2(H\bs G)$ of $G$, where $H$ is a centralizer of a non-central element of $G$. 

\subsection*{Remarks}
We also have applied the Arithmetic Margulis Lemma to give estimates on homology growth and Benjamini-Schramm convergence of sequences or arithmetic locally symmetric spaces in \cite{frachurtraim}, results which were originally part of the first version of this work.


\subsection*{Acknowldedgments} We are grateful to Misha Belolipetsky, Tsachik Gelander and David Vogan for their comments on a previous version of this work, and to Nir Lazarovich for his help with complexity. M.~F. and S.~H. thank the members of UIC Groups and Dynamics seminar. S.~H. was supported by the Sloan Fellowship foundation. J.~R. was supported by grants AGIRA - ANR-16-CE40-0022 and ANR-20-CE40-0010 from the Agence Nationale de la Recherche. M. Fr\k{a}czyk was supported by the Dioscuri programme initiated by the Max Planck Society, jointly managed with the National Science Centre in Poland, and mutually funded by Polish the Ministry of Education and Science and the German Federal Ministry of Education and Research.


\section{Preliminaries on arithmetic lattices}

\subsection{Arithmetic lattices}\label{arithlattices}

Let $G$ be an adjoint Lie group without compact factors. In this paragraph we review the standard construction of arithmetic lattices in $G$. Recall first that if $k$ is a number field and $\G$ is a semisimple $k$-group then an arithmetic subgroup of $\G(k)$ is any subgroup which has finite index inside the stabiliser of a lattice in the $k$-Lie algebra of $\G$. Now let $k$ be a number field and let $\G$ be a $k$-group such that $\G(k\otimes_\QQ \RR)\simeq G\times G'$, where $G'$ is compact. 
Then, by the Borel--Harish-Chandra theorem \cite{BHC62}, the image in $G$ of an arithmetic subgroup of $\G(k)$ is a lattice in $G$, and it is called an arithmetic lattice\footnote{This definition of arithmetic lattices uses adjointness of $G$ through \cite[Proposition 1.4]{Borel_Prasad}; without this hypothesis one has to allow passing to slightly larger subgroups. }. Recall that $\Gamma$ is irreducible when the Lie algebra of $\G$ is simple over $k$. 

If $\Gamma$ is an arithmetic lattice in $G$, then the work of Vinberg \cite{Vinberg_trace} allows to recover the field $k$ and the group $\G$ 
from $\Gamma$ itself. Namely, $k$ is the field generated by the adjoint traces of elements of $\Gamma$, and then $\ad(\Gamma)$ is contained in a minimal $k$-group which is $k$-isogenous to $\G$. We will call $k$ the {\em adjoint trace field} of $\Gamma$ and $\G$ its {\em enveloping group}. Slightly more is true: it follows from the definition that $\tr(\ad(\gamma))$ is an algebraic integer in $k$ for any $\gamma \in \Gamma$, and the following fact is a quick consequence.

\begin{lemma}
  For any arithmetic lattice $\Gamma$ in $G$, any semisimple element $\gamma \in \Gamma$ and any maximal torus $T$ of $G$ containing $\gamma$ the following is true. If $\lambda$ is a root of (the complexification of) $(G, T)$ then $\lambda(\gamma)$ is an algebraic integer. 
\end{lemma}
\begin{proof}
Choose a semisimple $k$-group $\G$ such that $\G(k\otimes_{\QQ}\RR)=G\times G'$ with $G'$ compact and $\Gamma$ is an arithmetic subgroup of $\G(k)$. There is a rational representation $\rho\colon \G\to \GL_n(k)$, with finite kernel, such that $\rho(\Gamma)\subset \GL_n(\mathfrak o_k)$. Let $\Sigma$ be the set of $\overline k$-weights of $T$ in the representation $\rho.$ For any $\xi\in \Sigma$ and $\gamma\in T(k)\cap \Gamma$ the value $\xi(\gamma)$ is an eigenvalue of the matrix $\rho(\gamma)\in \GL_n(\mathfrak o_k)$, hence an algebraic integer. On the other hand, by \cite[\S 31.1]{BorelBook}, we know that the set of weights of $T$ in $\rho$ generates a finite index subgroup of all characters of $T$. It follows that for any character $\eta$ of $T$, the value $\eta(\gamma)$ is an algebraic integer, in particular it will be true for the roots. 
\end{proof}


\subsection{Weil height and minimal displacement}

Recall that if $G$ is a semisimple Lie group, then there is an associated symmetric space $X = G/K$, where $K$ is a maximal compact subgroup. The space $X$ carries a canonical non-positively curved $G$-invariant Riemannian metric induced by the Killing form; we denote by $d_X$ its Riemannian distance.

If $g \in G$ is semisimple, there is a flat Euclidean subspace $F \subset X$ which is $g$-invariant and such that $g$ acts on $F$ as a translation. The value $d_X(x, gx)$ for any $x \in X$ is called the translation distance of $g$ and we will denote it by $\ell(g)$; by the CAT(0)-property of $d_X$ it equals the minimal displacement of $g$ on $X$. 

\medskip

Our goal here is to estimate the translation distance of elements of arithmetic lattices in terms of the Weil height, which we will now define. Let $a$ be an algebraic integer and let $K$ be a number field containing $a$. Let $\iota_1, \ldots, \iota_{[K:\QQ]}$ be all the embeddings of $K$ into $\CC$ (including complex conjugates). The Weil height of $a$ of an algebraic integer is given by
\begin{equation} \label{defn_Weil_ht}
  h(a) = \frac 1{[K:\QQ]} \sum_{i=1}^{[K:\QQ]} \log^+|\iota_i(a)|,
\end{equation}
which does not depend on the choice of the field $K$. Here $\log^+(t) = \max(0, \log(t))$. 

Let $\Gamma$ be an arithmetic lattice in an adjoint group $G$, with trace field $k$ and enveloping group $\G$. Let $\gamma \in \G(k)$ be a semisimple element. Choose a maximal $k$-torus $\T\subset \G$ containing $\gamma$ and let $\Phi=\Phi(\G,\T)$ be the associated root system. All the values $\lambda(\gamma)$ are algebraic integers. We define the Weil height $h(\gamma)$ by :
\begin{equation} \label{defn_Weil_ht_ss}
  h(\gamma) = \sum_{\lambda \in \Phi} h(\lambda(\gamma)).
\end{equation}
Now we state the main result about arithmetic lattices that we will be using in the sequel. The following statement seems to be well-known to experts (see e.g. \cite{Lapan_Linowitz_Meyer,GelaVol}) but it is hard to locate in the literature in the form we need, so we give a short argument using a computation in the former reference. We phrase it in terms of Weil height rather than Mahler measure since the former occurs more naturally in the proof and will be more convenient to use in the sequel. 

\begin{proposition} \label{p-MahlerDisp}
  Let $G$ be an adjoint Lie group without compact factors and $X$ its symmetric space. There exist constants $a_X, A_X$ such that for any irreducible arithmetic lattice $\Gamma$ in $G$ with trace field $k$ and any semisimple element $\gamma \in \Gamma$ we have
  \begin{equation} \label{eq-MahlerDisp}
    a_X \le \frac{\ell(\gamma)}{[k:\QQ] h(\gamma)} \le A_X.
  \end{equation}
\end{proposition}

\begin{proof}
  Let $\G$ be the enveloping $k$-group of $\Gamma$. For any embedding $\sigma$ of $k$ into $\CC$ let $k_\sigma$ be the completion of $k$ with respect to the induced valuation. Let $\sigma_1,\ldots, \sigma_d$ be the list of embeddings $k\hookrightarrow\mathbb C,$ with one representative for each pair of complex conjugates. We note that $\G(k\otimes_\QQ \RR)\simeq \prod_{i=1}^d \G(k_{\sigma_i}).$ Let $r$ be the number of simple real or complex factors of $G$. By the choice of $\G$, we can order the embeddings in such a way that $G\simeq \prod_{i=1}^r \G(k_{\sigma_i})$ and $\G(k_{\sigma_j})$ are compact for all $d\geq j>r$. Since there is at least one embedding $\sigma$ of $k$ into $\CC$ such that $\G(k_\sigma)$ is a simple factor of $G$, we have that $\dim\G$ is bounded by the dimension of $G$. Let $n = \dim \G$ and let  $
  \rho \colon  \G\to \SL_n $
  be the adjoint representation of $\G$. 

   For each $i=1,\ldots, d$ let $\rho_i\colon \G(k_{\sigma_i})\to \SL_n(k_{\sigma_i})$ be the map induced by $\rho$. 
 The map 
 \[\rho\colon \G(k\otimes_\QQ \RR)=\prod_{i=1}^d \G(k_{\sigma_i})\to \prod_{i=1}^d \SL_n(k_{\sigma_i})=\SL_n(k\otimes_\QQ \RR)\] induces an isometric embedding of $X$ into the symmetric space $Y=\prod_{i=1}^d Y_i$, where $Y_i$ is the symmetric space of $\SL_n(k_{\sigma_i})$. 

  We now fix a semisimple element $\gamma \in \Gamma$, a maximal torus $\mathbf T$ containing $\gamma$ and let $\Phi=\Phi(\G,\mathbf T)$ be the associated root system. Denote by $\ell_i(\gamma)$ the translation length of $\rho_{\sigma_i}(\gamma)$ on $Y_i$. We note that for each $j>r$, the translation length $\ell_i(\gamma)$ is zero because $\G(k_{\sigma_i})$ is compact.  By Cauchy--Schwarz, the translation length $\ell(\rho(\gamma))=\sqrt{\sum_{i=1}^r \ell_i(\gamma)^2}$ on $Y$ is asymptotic to $\sum_{i=1}^r \ell_i(\gamma)$, with constants depending only on $r$, which itself is bounded by $\dim(X)$. Let us first estimate each $\ell_i(\gamma)$. We fix an extension of $\sigma_i$ to an embedding $\tau_i$ of the field $k(\lambda(\gamma)\mid \lambda\in \Phi)$ (for shortness we will denote this field by $k(\Phi(\gamma))$) into $\CC$. Note that the numbers $\tau_i(\lambda(\gamma))$ for $\lambda\in \Phi$ are exactly the roots of the characteristic polynomial of $\rho_{\sigma_i}(\gamma)$. In particular, the set $\{\tau_i(\lambda(\gamma)), \lambda\in \Phi\}$ does not depend on the choice of the extension $\tau_i$. By \cite[Theorem 2.3]{Lapan_Linowitz_Meyer} we get that\footnote{Be careful that the definition in \cite[Equation (2.1)]{Lapan_Linowitz_Meyer} for the Mahler measure is unconventional as they do not restrict to polynomials with $\ZZ$ coefficients. }
  \[
  \ell_i(\gamma) \asymp_n \sum_{\lambda\in\Phi} \log^+|\tau_i(\lambda(\gamma)|
  \]
  with implicit constants depending only on $n$. It follows that
  \begin{equation}\label{est_in_Y}
    \ell(\rho(\gamma)) \asymp_{X} \sum_{i=1}^r \ell_i(\gamma) \asymp_X \sum_{i=1}^r \sum_{\lambda \in \Phi} \log^+ |\tau_i(\lambda(\gamma))| 
  \end{equation}
  For $j>r$ and any embedding $\tau\colon k(\Phi(\gamma))\to\CC$ extending $\sigma_j$, we have $|\tau(\lambda(\gamma))| = 1$ for all $\lambda \in \Phi$, since $\G(k_{\sigma_j})$ is compact. Therefore, the last sum is equal to $$\sum_{i=1}^d \sum_{\lambda \in \Phi} \log^+ |\tau_i(\lambda(\gamma))|$$ 
  As $[k(\Phi(\gamma)):k] \le n$ we get, according to the definitions \eqref{defn_Weil_ht} and \eqref{defn_Weil_ht_ss}, that 
  \[
  [k:\QQ] h(\gamma) \le \sum_{i=1}^d \sum_{\lambda \in \Phi} \log^+ |\tau_i(\lambda(\gamma))| \le n \cdot [k:\QQ] h(\gamma)
  \]
  Since the image of $X$ is totally geodesic in $Y$, $\ell(\rho(\gamma))$ equals $\ell(\gamma)$ up to a multiplicative constant depending only on $X$. This, together with \eqref{est_in_Y} finishes the proof. 
\end{proof}


\subsection{Dobrowolski's bounds and injectivity radius}\label{sec-injrad}

Let $d(g,h):=d_X(gK,hK)$. It is a bi--$K$--invariant, left $G$--invariant semi-metric on $G$. Let 
\[
\B(R) = \{ g\in G : d_X(K,gK) \leq R \}=\{g\in G : d(g,1)\leq R\}.
\]
One can think of $\B(R)$ as a ball in $G$, although it only corresponds to the semi-metric $d$ on $G$. Let $\Lambda\subset G$ be a discrete subgroup of $G$ and let $x = \Lambda g K\in \Lambda\bs X$ be a point in the locally symmetric space $M = \Lambda\bs X$. The {\em injectivity radius} of $M$ at $x$ is defined as 
\begin{align*}
  \InjRad_{\Lambda\bs X}(x) &= \sup\left\{ R\geq 0\,|\, d_X(gK, \gamma gK)\geq \frac R 2 \textrm{ for every } \gamma\in \Lambda \setminus Z(\Lambda) \right\} \\
  &=\sup\{R\geq 0\,|\, \Lambda^g\cap \B(2R)=Z(\Lambda)\}.
\end{align*}
If $M$ is a manifold (that is, if $\Lambda$ is torsion-free) then this is the same as the usual definition of injectivity radius. In general, this definition also takes into account the singular locus of the orbifold $M$. We also recall that the global injectivity radius is defined by 
\[
\InjRad_M := \inf_{x \in M} \InjRad_M(x). 
\]
As an application of Proposition \ref{p-MahlerDisp} together with bounds on Mahler measure due to Dobrowolski one obtains the following estimate on the injectivity radius of arithmetic manifolds. The estimate below follows immediately from \cite[Prop. 1.4]{GelaVol}, we provide a short proof for reader's convenience.

\begin{proposition} \label{dobrowolski_bound_injrad}
  Let $G$ be an adjoint Lie group without compact factors and $X$ its symmetric space. There exists a constant $c$ depending only on $G$ such that if $\Gamma$ is a cocompact irreducible arithmetic lattice in $G$ with trace field $k$ and $[k:\QQ] \geq 2$, then for any semisimple non-compact $\gamma \in \Gamma$ we have
  \[
  \ell(\gamma) \ge \frac{c}{(\log [k:\QQ])^3}.
  \]
  In particular, if $M = \Gamma \bs X$ is compact and $\Gamma$ is torsion-free, then 
  \[
  \InjRad_M \ge \frac{c}{(\log [k:\QQ])^3}.
  \]
\end{proposition}

\begin{proof}
  The algebraic integers $\lambda(\gamma)$ have degree at most $[k:\QQ] \cdot \dim(G)$ over $\QQ$. By \cite[Theorem 1]{Dobro79} it follows that $m(\lambda(\gamma)) \ge \frac 1{2(\log[k:\QQ]+\log\dim(G))^3}$ for all $\lambda$ such that $\lambda(\gamma)$ is not a root of unity. Since $\gamma$ is non-compact there exists at least one such $\lambda$ and it follows that
  \[
m(\gamma) \ge \frac 1{2(\log[k:\QQ]+\log\dim(G))^3} \gg_G \frac 1{\log[k:\QQ]^3}, 
  \]
  and the result follows by \eqref{eq-MahlerDisp}. 
\end{proof}


\section{Arithmetic collar lemma}

\begin{theorem}\label{AML}
Let $G$ be a real semi-simple Lie group. There exists a constant $\varepsilon_G>0$ with the following property. Let $\Gamma\subset G$ be an irreducible arithmetic lattice with trace field $k$. Let $x\in X$. Then, the subgroup generated by the set 
\[\{\gamma\in\Gamma\, |\, d(x,\gamma x)\leq \varepsilon_G [k:\QQ]\}\] is virtually nilpotent. Moreover, if $\Gamma$ is uniform then this subgroup is virtually abelian.  
\end{theorem}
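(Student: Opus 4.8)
The plan is to obtain the statement from Breuillard's height gap theorem \cite{Breuillard} in the main case, falling back on the classical Margulis lemma when the degree of the trace field is bounded. I will assume that the admissible $k$-group $\G$ for $\Gamma$ is $k$-simple; the general case reduces to this one by treating the $k$-simple factors of $\G$ separately.

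Suppose first that $\Gamma$ is non-uniform. Then $\G$ is $k$-isotropic, so it contains a nontrivial $k$-split torus, which stays split over every completion $k_v$; hence $\G(k_v)$ is non-compact for every $v\in\val_\infty$, and since $\G(k_\infty)\simeq G\times(\text{compact})$ there can be no compact Archimedean factor. Consequently the number of Archimedean places of $k$ is at most the number of simple factors of $G$, so $[k:\QQ]\le 2\dim G$ is bounded in terms of $G$ alone. Choosing $\eps_G$ small enough that $2\eps_G\dim G$ lies below the classical Margulis constant of $G$, the conclusion follows from the usual Margulis lemma. So from now on $\Gamma$ is uniform, i.e.\ $\G$ is $k$-anisotropic.

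Fix a faithful $k$-rational representation $\rho\colon\G\hookrightarrow\SL_N$ with $N=N_G$ depending only on $G$, and enlarge $\Gamma$ to a maximal arithmetic lattice containing it — which is harmless and which, being of congruence type by Proposition~\ref{defn_congruence}, is contained in a compact subgroup $\Omega_v$ of $\SL_N(k_v)$ for every $v\in\val_f$, with $\Omega_v\subset\SL_N(\mo_{k_v})$ for all but finitely many $v$. Put $d=[k:\QQ]$ and
\[
S=\{\gamma\in\Gamma\ :\ d(x,\gamma x)\le\eps_G d\},
\]
a finite (since $\Gamma$ is discrete) symmetric subset of $\SL_N(\ovl\QQ)$ containing $1$. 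The crucial step is to bound Breuillard's normalised height $\wdh h(S)=\lim_{n\to\infty}\tfrac1n\,h(S\cup S^2\cup\cdots\cup S^n)$, where $h(T)=\tfrac1d\sum_v\log\max_{t\in T}\norm t_v$ for an operator norm $\norm\cdot_v$. Each power $S^j$ lies in the group $\Omega_v$ at every finite place, so the finite places and the compact Archimedean factors contribute a quantity bounded independently of $n$, hence $0$ to $\wdh h(S)$. At each of the at most $\dim G$ non-compact Archimedean places $v$ I would use that $\log\norm g_v$ is comparable, with constants depending only on $N$, to the displacement of $g$ on the symmetric space $X^N_v$ of $\SL_N(k_v)$, and that $\rho$ maps the symmetric space of $\G(k_v)$ into $X^N_v$ with bounded distortion; writing $o_v$ for a basepoint and $x_v$ for the image of $x$, any product $\gamma'$ of at most $n$ elements of $S$ then satisfies
\[
\log\norm{\rho(\gamma')}_v\ \ll_N\ d_{X^N_v}(o_v,x_v)+n\cdot\max_{\gamma\in S}d(x,\gamma x)\ \le\ d_{X^N_v}(o_v,x_v)+n\,\eps_G\,d .
\]
Dividing by $n$ and letting $n\to\infty$ absorbs the additive constant $d_{X^N_v}(o_v,x_v)$, so that altogether $\wdh h(S)\le C_G\,\eps_G$ with $C_G$ depending only on $G$.

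To finish, Breuillard's height gap theorem \cite{Breuillard} provides $\varepsilon(N)>0$, depending only on $N$, such that any finite subset of $\SL_N(\ovl\QQ)$ generating a subgroup which is not virtually solvable has normalised height at least $\varepsilon(N)$. Choosing $\eps_G$ so small that $C_G\eps_G<\varepsilon(N)$ (in addition to the inequality required in the non-uniform case) forces $\langle S\rangle$ to be virtually solvable. Since $\Gamma$ acts properly and cocompactly on the CAT(0) space $X$, the solvable subgroup theorem then shows that $\langle S\rangle$, being a virtually solvable subgroup, is virtually abelian — a fortiori virtually nilpotent. The hard part is the third paragraph: identifying Breuillard's normalised height, checking that for a congruence arithmetic lattice it localises at the (boundedly many) non-compact Archimedean places, and controlling it there by the displacement $d(x,\gamma x)$ uniformly in $x$ — the mechanism being that the additive constants depending on $x$ disappear in the limit defining $\wdh h$. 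The two reductions and the final passage from virtually solvable to virtually nilpotent use only standard structure theory of algebraic groups together with the solvable subgroup theorem in non-positive curvature, and should be routine.
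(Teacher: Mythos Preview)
Your proposal is correct and follows the same overall architecture as the paper: reduce non-uniform lattices to the classical Margulis lemma via the bound $[k:\QQ]\le O_G(1)$, apply Breuillard's height gap to get virtually solvable in the uniform case, and then upgrade to virtually nilpotent (in fact virtually abelian). There are two genuine but minor variations worth noting.

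First, where you work with the normalised height $\wdh h(S)$ directly and argue place by place that $\wdh h(S)\le C_G\eps_G$, the paper instead invokes Breuillard's Corollary~1.7: if $\langle F\rangle$ is not virtually solvable then some $\gamma\in F^{N_1}$ has an eigenvalue $\lambda$ with $h(\lambda)>\varepsilon_1$, hence $m(\gamma)\ge[k:\QQ]\varepsilon_1$, and then Proposition~\ref{p-MahlerDisp} converts this to $\ell(\gamma)\ge a_X[k:\QQ]\varepsilon_1$, contradicting $d(x,\gamma x)\le N_1\eps_G[k:\QQ]$. This is shorter because it packages the ``finite places contribute nothing'' step into the single observation that the eigenvalues are algebraic integers, and it avoids having to control the basepoint-dependent constants through the limit defining $\wdh h$. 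Your route is perfectly valid but does more work by hand. One small point: the ``enlarge $\Gamma$ to a maximal arithmetic lattice'' step is unnecessary and slightly awkward (the normaliser need not sit inside $\G(k)$); all you actually need is that $\rho(\Gamma)$ is bounded at every finite place and lies in $\SL_N(\mo_{k_v})$ at almost all of them, which follows from commensurability with a principal lattice.

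Second, for the upgrade from virtually solvable to virtually abelian in the uniform case you invoke the CAT(0) solvable subgroup theorem (all elements of a uniform lattice act as semisimple isometries). The paper instead uses the purely algebraic Lemma~\ref{lem-anisolvable}: a connected solvable $k$-subgroup of a $k$-anisotropic semisimple group is abelian, because its unipotent radical must be trivial. Both arguments are standard; the paper's is perhaps more elementary in that it does not appeal to CAT(0) geometry.
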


We will deduce \ref{AML} from the height gap theorem of \cite{Breuillard} in the following form.

\begin{theorem}[{\cite[Cor 1.7]{Breuillard}}]\label{thm-Breuillard} There are constants $N_1 = N_1(d) \in \mathbb N, 
\varepsilon_1 = \varepsilon_1(d) > 0$
such that if $F$ is any finite subset of $\GL_d(\overline\QQ)$ containing $1$ and generating a
non-virtually solvable subgroup, then we may find $a\in F^{N_1}$ and an eigenvalue $\lambda$
of $a$ such that $h(\lambda) > \varepsilon_1$.
\end{theorem}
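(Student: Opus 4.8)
\textbf{Proof proposal for Theorem \ref{AML}.}

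The plan is to reduce the arithmetic Margulis lemma to the height gap theorem \ref{thm-Breuillard} by producing, from a displacement bound, a linear representation of $\Gamma$ in which every matrix entry (hence every eigenvalue) has controlled Weil height. First I would fix an admissible pair $(\G, k)$ for $G$ with $\Gamma$ commensurable to $\Gamma_U$; after passing to a finite-index subgroup (which changes $\Gamma$ by bounded index and does not affect virtual nilpotence) I may assume $\Gamma \subset \G(k)$ and that $\ad(\Gamma)$ preserves an $\mo_k$-lattice $L$ in the $k$-Lie algebra, as in the proof of Proposition \ref{dobrowolski_bound}. Composing with the regular representation $k \hookrightarrow \QQ^{[k:\QQ]}$ and restriction of scalars I obtain a faithful $\QQ$-rational representation $\rho\colon \Gamma \to \GL_N(\QQ)$ with $N = [k:\QQ]\cdot\dim(G)$, such that the eigenvalues of $\rho(\gamma)$ are exactly the Galois conjugates (over $\QQ$) of the $\ad$-eigenvalues of $\gamma$, i.e. of the numbers $\lambda(\gamma)$, $\lambda \in \Phi$, together with $1$'s.

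The second step is the metric input: there should be a constant $C_G$ so that whenever $d(x, \gamma x) \le r$ for $\gamma$ semisimple, every $\ad$-eigenvalue $\lambda(\gamma)$ satisfies $\sum_{v \in \val_\infty} \iota_v \log\max(1, |\lambda(\gamma)|_v) \le C_G r$; equivalently $m(\lambda(\gamma)) \le C_G r$. For semisimple noncompact $\gamma$ this is exactly Proposition \ref{p-MahlerDisp} (and its proof via \cite{Lapan_Linowitz_Meyer}), since $\ell(\gamma) \le d(x,\gamma x)$ and $m(\gamma) = \sum_\lambda [k:\QQ] h(\lambda(\gamma))$; for compact semisimple $\gamma$ all $\lambda(\gamma)$ are roots of unity so the bound is automatic. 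By the previous Proposition these $\lambda(\gamma)$ are algebraic integers, so the archimedean sum is the full Weil height up to the normalisation constant, and $h(\lambda(\gamma)) \le C_G' r / [k:\QQ]$. One also needs the analogous bound when $\gamma$ is not semisimple, handled by passing to the semisimple part (which lies in a possibly larger lattice with the same trace field and comparable displacement) or simply noting that unipotent parts contribute eigenvalue $1$.

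Now set $S = \{\gamma \in \Gamma : d(x,\gamma x) \le \eps_G [k:\QQ]\}$ and let $\Delta = \langle S\rangle$. Suppose for contradiction $\Delta$ is not virtually nilpotent; since $\Delta$ is a subgroup of a lattice in a semisimple group, by the Tits alternative it is not virtually solvable either, so $\rho(\Delta) \subset \GL_N(\QQ)$ is not virtually solvable and $\rho(S) \cup \{1\}$ is a finite generating set. Apply Theorem \ref{thm-Breuillard} with $d = N$: there is $a \in (\rho(S))^{N_1(N)}$ and an eigenvalue $\mu$ of $a$ with $h(\mu) > \eps_1(N)$. But $a = \rho(\gamma)$ with $\gamma = \gamma_1\cdots\gamma_{N_1}$, $\gamma_i \in S$, so $d(x, \gamma x) \le N_1 \eps_G [k:\QQ]$, and $\mu$ is a Galois conjugate of some $\lambda(\gamma)$, hence $h(\mu) = h(\lambda(\gamma)) \le C_G' N_1 \eps_G$ by Step 2. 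Choosing $\eps_G := \eps_1(N)/(2 C_G' N_1)$ — noting $N, N_1(N), \eps_1(N)$ depend only on $[k:\QQ]$ and $G$, so this requires $\eps_G$ to depend on $[k:\QQ]$ as well as $G$; to get a uniform constant I would instead run the argument with $d$ replaced by a bound $N \le [k:\QQ]\dim G$ and absorb the $[k:\QQ]$-dependence of $N_1, \eps_1$ into the linear slack, using that $h(\mu)\le C'_G N_1([k:\QQ]) \eps_G$ must exceed $\eps_1([k:\QQ])$, which fails once $\eps_G$ is a small enough constant times $\inf_d \eps_1(d)/N_1(d)$ — contradicts $h(\mu) > \eps_1$. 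Hence $\Delta$ is virtually nilpotent. For Remark \ref{AML_uniform_lattice}: when $\Gamma$ is uniform it is torsion-free-by-finite and contains no unipotents, so a virtually nilpotent subgroup of $G$ consisting of semisimple elements is virtually abelian (a nilpotent group of semisimple elements in a linear group is, up to finite index, contained in a torus).

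\textbf{Main obstacle.} The delicate point is the uniformity of $\eps_G$ in the trace field degree: Breuillard's constants $N_1(d), \eps_1(d)$ degrade with $d$, while the natural representation has dimension $N \asymp [k:\QQ]$, and the displacement hypothesis scales like $[k:\QQ]$ — so one must check that the linear-in-$[k:\QQ]$ room in the displacement bound genuinely beats the (at worst polynomial, by Dobrowolski-type control) decay of $\eps_1(N)/N_1(N)$, which is precisely where the strength of the height gap theorem (uniform lower bound independent of $d$ in \cite[Cor 1.7]{Breuillard}, as opposed to the Dobrowolski bound used in Proposition \ref{dobrowolski_bound}) is essential. A secondary technical point is handling non-semisimple $\gamma \in S$ and the reduction to $\Gamma \subset \G(k)$ preserving an integral lattice without losing control of displacements.
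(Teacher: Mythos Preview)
Your overall strategy---Breuillard plus Proposition~\ref{p-MahlerDisp}---is the same as the paper's, but two steps diverge, and one of them is a genuine error.

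\medskip

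\textbf{The restriction-of-scalars detour is unnecessary and creates your ``main obstacle'' artificially.} Breuillard's Corollary~1.7 is stated for $\GL_d(\overline{\QQ})$, with constants depending only on~$d$; the paper's quotation of it over~$\QQ$ is a harmless imprecision. The paper simply uses the adjoint representation $\G(k)\hookrightarrow\GL_{\dim\G}(k)\subset\GL_{\dim\G}(\overline{\QQ})$ and applies Breuillard with $d=\dim\G$, which is bounded in terms of~$G$ alone. The constants $N_1,\varepsilon_1$ then depend only on~$G$, and the factor $[k:\QQ]$ on both sides of the inequality $a_X m(\gamma)\le\ell(\gamma)\le d(x,\gamma x)$ cancels cleanly. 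Your route through $\GL_{[k:\QQ]\dim G}(\QQ)$ makes $d$ grow with the trace field and forces you to control $\varepsilon_1(N)/N_1(N)$ as $N\to\infty$, which Breuillard's theorem does not provide and which you do not establish.

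\medskip

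\textbf{The passage from ``not virtually nilpotent'' to ``not virtually solvable'' is wrong.} The Tits alternative gives the dichotomy \emph{virtually solvable vs.\ contains a nonabelian free group}; it does not say that a virtually solvable linear group is virtually nilpotent. Concretely, for $A\in\SL_2(\ZZ)$ hyperbolic, the subgroup $\ZZ\ltimes_A\ZZ^2\subset\SL_3(\ZZ)$ is solvable but not virtually nilpotent. So Breuillard's theorem only yields that $\langle F\rangle$ is virtually \emph{solvable}. The paper upgrades this in two separate steps: for non-uniform~$\Gamma$ the trace-field degree is bounded by $\dim G$, so one may take $\varepsilon_G\le\varepsilon_1/d_0$ with $\varepsilon_1$ the ordinary Margulis constant and $d_0$ this bound, reducing to the classical Margulis lemma; for uniform~$\Gamma$ the group~$\G$ is $k$-anisotropic, and Lemma~\ref{lem-anisolvable} shows any connected solvable $k$-subgroup of an anisotropic semisimple group is abelian, so the Zariski closure of $\langle F\rangle$ has abelian identity component and $\langle F\rangle$ is virtually abelian. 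Your sketch of Remark~\ref{AML_uniform_lattice} gestures at the second of these, but you invoke it only after the incorrect Tits step rather than as the actual mechanism for the upgrade.
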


We will also need the following well-known lemma. 

\begin{lemma}\label{lem-anisolvable}
Let $\bH$ be a connected solvable subgroup of a linear semi-simple group $\bG$. Suppose that both $\bH,\bG$ are defined over $k$ and that $\bG$ is anisotropic over $k$. Then $\bH$ is abelian.
\end{lemma}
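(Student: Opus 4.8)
The plan is to use the structure theory of connected solvable linear algebraic groups together with the anisotropy hypothesis to rule out the existence of any nontrivial unipotent part and of any nontrivial commutator. First I would recall that a connected solvable $k$-group $\bH$ has a decomposition (over $\overline{k}$, and in fact over $k$ by a theorem of Grothendieck/Borel--Serre since $\bH$ is connected solvable) as $\bH = \T \ltimes \bH_u$, where $\bH_u$ is the unipotent radical and $\T$ is a maximal torus, both defined over $k$. The key point is that any unipotent subgroup of $\bG$ generates a nontrivial $k$-split unipotent subgroup, and if $\bH_u$ were nontrivial then $\bG$ would contain a nontrivial $k$-split torus normalizing it (for instance via the dynamic/parabolic construction, or simply because a nontrivial smooth connected unipotent $k$-group in characteristic zero is $k$-split and its normalizer contains a cocharacter), contradicting anisotropy. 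Hence $\bH_u = 1$ and $\bH = \T$ is a torus, which is abelian.

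Let me be more careful about the unipotent step, since that is where the real content lies. The cleanest route: in characteristic zero a nontrivial connected unipotent $k$-group contains a copy of $\mathbb{G}_a$ defined over $k$, and inside a reductive group $\bG$ any nontrivial unipotent element lies in the unipotent radical of a proper parabolic $k$-subgroup (by the theory of parabolics attached to unipotent elements, or by Borel--Tits). A proper parabolic $k$-subgroup exists only if $\bG$ has positive $k$-rank, i.e. contains a nontrivial $k$-split torus — but that contradicts $k$-anisotropy of $\bG$. Therefore $\bG$ has no nontrivial unipotent $k$-elements at all, so $\bH_u$, being a unipotent $k$-group, must be trivial. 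I would cite the relevant statements from Platonov--Rapinchuk or Borel's book on linear algebraic groups rather than reprove them.

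With $\bH_u = 1$ established, $\bH$ equals its maximal torus $\T$ and is therefore abelian, which is exactly the claim. I should double-check one subtlety: I want $\bH$ to be defined over $k$ and connected, both of which are hypotheses, and I want the Levi/unipotent-radical decomposition of $\bH$ to be over $k$ — this is valid in characteristic zero (Mostow's theorem on the existence of $k$-Levi subgroups of connected $k$-groups), so there is no issue. One should also note that we only need $\bH$ solvable to know it has the form $\T\ltimes\bH_u$; the solvability is not otherwise used once we kill the unipotent part.

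The main obstacle, such as it is, is making precise and correctly attributed the statement ``a $k$-anisotropic semisimple group has no nontrivial unipotent $k$-points.'' The conceptually safest argument is via Borel--Tits: a nontrivial unipotent $k$-element generates a unipotent $k$-subgroup $\bU$, and by the Borel--Tits theorem the normalizer $N_{\bG}(\bU)$ (or rather the parabolic canonically attached to $\bU$) is a proper parabolic $k$-subgroup of $\bG$; the existence of a proper parabolic $k$-subgroup is equivalent to $\bG$ having positive $k$-rank, contradicting anisotropy. Everything else — the $k$-Levi decomposition of $\bH$, the fact that a torus is abelian — is standard. So the proof is short once that one input is cited cleanly, and I would keep it to a few lines.
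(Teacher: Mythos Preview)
Your proposal is correct and follows essentially the same approach as the paper: both arguments kill the unipotent radical $\bH_u$ of $\bH$ using the characterisation of $k$-anisotropy as the absence of nontrivial $k$-rational unipotent subgroups, and then conclude abelianness. The only cosmetic difference is in the last step: the paper observes directly that $[\bH,\bH]\subset\bH_u$ (citing \cite[10.6]{BorelBook}), so triviality of $\bH_u$ gives abelianness immediately, whereas you pass through the Levi decomposition $\bH=\T\ltimes\bH_u$ and Mostow's theorem to identify $\bH$ with a torus. The paper's route is marginally lighter since it avoids invoking the existence of a $k$-Levi, but both are short and standard.
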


\begin{proof}
Recall that a semi-simple algebraic group $\G$ over $k$ is anisotropic if one of the following equivalent conditions is satisfied:  
\begin{enumerate}
\item  There are no non-trivial $k$-rational maps $\mathbb G_m\to \G$ 
\item There are no rational unipotent subgroups of $\mathbf G$.
\end{enumerate} 
Let $\H$ be as in the statement and let $\mathbf N$ be the unipotent radical of $\bH$. Then, by \cite[10.6]{BorelBook}, the derived subgroup $[\bH,\bH]$ is contained in $\mathbf N$. However, $\bN$ is a $k$-rational unipotent subgroup of $\bG$ so it must be trivial. We conclude that $\bH$ is abelian. 
\end{proof}

\begin{proof}[Proof of Theorem \ref{AML}]
Let $\eps > 0$, $x\in X$ and put
\[
F=\{\gamma\in\Gamma\, |\, d(x,\gamma x)\leq \eps [k:\QQ]\}. 
\]
We first want to determine an $\eps > 0$ depending only on $G$ such that (for any $x$) the subgroup generated by $F$ is virtually solvable. 

Thus, we fix $\eps>0$, to be determined in the course of the argument. We assume that $\langle F\rangle$ is not virtually solvable, and we want to reach a contradiction through Breuillard's gap theorem. From Theorem \ref{thm-Breuillard} we get that for $\eps$ small enough (depending only on $d = \dim(\G)$) there exists a $\gamma \in F^{N_1}$ and an eigenvalue $\lambda$ of $\ad(\gamma)$ such that $h(\lambda) > \eps_1$. Now the eigenvalue $\lambda$ is equal to a root evaluated at $\gamma$, and the Weil height of algebraic numbers is positive, so we have that 
\[
h(\gamma) \ge \eps_1
\]
by the definition \eqref{defn_Weil_ht_ss}. 
It then follows from Proposition \ref{p-MahlerDisp} that  
\[
\ell(\gamma) > a_X \eps_1[k:\QQ],
\]
where $a_X$ depends only on $G$. On the other hand, 
\[
\ell(\gamma) \le d(x, \gamma x) < N_1\eps[k:\QQ]
\]
as $\gamma \in F^{N_1}$. So if $\eps \le \tfrac{a_X\eps_1}{N_1}$ we get our contradiction, and we observe that the constant $\tfrac{a_X\eps_1}{N_1}$ depends only on $G$. 

\medskip 

It remains to prove that $\langle F\rangle$ is virtually abelian if $\Gamma$ is cocompact and virtually nilpotent in general. In the case where $\Gamma$ is cocompact this is an immediate consequence of Lemma \ref{lem-anisolvable}, as $\Gamma$ is cocompact if and only if $\G$ is anisotropic over $k$ (this would also follow from the Solvable Subgroup Theorem of \cite{BriHa}). 

If $\Gamma$ is not cocompact, then we may assume that $[k:\QQ]=1$ (see \cite[Lemma 5.2]{Gelander1}). In this case the theorem reduces to the classical Margulis collar lemma for symmetric spaces. 
\end{proof}


\section{Uniform growth of orbital integrals}

\subsection{Orbital integrals} \label{sec-orbint} 
Let $f \in C_c(G)$ be a compactly supported function. Recall that for a semisimple element $\gamma \in G$ the orbital integral associated with $(f, \gamma)$ is defined as follows: 
\[
O(\gamma, f) = \int_{G_{\gamma} \bs G} f(g^{-1}\gamma g) dg.
\]
While is not obvious that this integral converges whenever $\gamma$ is semi-simple, it follows for instance from the fact that the map $G_\gamma\bs G$, $g\mapsto g^{-1}\gamma g$ is proper, which is proven in \cite[Theorem 27 on p. 215, Theorem 17 on p. 211]{Varadarajan}. 

\subsection{Uniform growth}
The main result in this section is the following. 

\begin{theorem}\label{thm-UniformGrowth}
  Let $G$ be a semisimple Lie group with finite center and let $\Gamma$ be a uniform lattice in $G$. There exists  $C, \delta > 0$ (depending only on $G$) such that for every $R_1, R_2 \geq 0$ and every non-central $\gamma \in \Gamma$: 
  \[
  \mathcal O(\gamma, 1_{B(R_1)}) < C e^{-\delta R_2} \mathcal O(\gamma, 1_{B(R_1 + R_2)}).
  \]
\end{theorem}

The proof is based on the existence of uniform spectral gap for the action of $G$ by conjugation on each non-central semi-simple conjugacy class. 

\begin{lemma}\label{lem:UnifSG} Let $G$ be a semi-simple group without compact factors. There is a positive constant $\delta>0$ and a non-negative smooth bi--$K$--invariant symmetric function $\varphi\colon G\to \mathbb R$ supported on $\B(1)$ such that  $\int_G \varphi(g)dg=1$ and the following property holds. For every proper closed reductive subgroup $H\subset G$ which is a centralizer of a semisimple element and every $f\in L^2(H\bs G)$ we have 
\[ \|f\ast \varphi\|_2\leq e^{-\delta} \|f\|_2. \]
\end{lemma}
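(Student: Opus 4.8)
The plan is to read the conclusion representation‑theoretically and reduce it to a uniform spectral gap for a finite list of homogeneous spaces. Write $\rho_H$ for the right regular representation of $G$ on $L^2(H\bs G)\cong\mathrm{Ind}_H^G\mathbf 1$; with the usual conventions $f\ast\varphi=\rho_H(\varphi)f$, so the assertion ``$\|f\ast\varphi\|_2\le e^{-\delta}\|f\|_2$ for all $f\in L^2(H\bs G)$'' is exactly $\|\rho_H(\varphi)\|_{\mathrm{op}}\le e^{-\delta}$. I would fix once and for all a non‑negative smooth symmetric bi--$K$--invariant $\varphi$ supported in $\B(1)$, with $\int_G\varphi=1$ and $\varphi>0$ near $1$. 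For such a $\varphi$ one has the standard fact that, for \emph{any} unitary representation $\pi$ of $G$, $\|\pi(\varphi)\|_{\mathrm{op}}=1$ iff $\mathbf 1_G$ is weakly contained in $\pi$; so it suffices to find one $\varphi$ with $\sup_H\|\rho_H(\varphi)\|_{\mathrm{op}}<1$ over all proper closed reductive $H$, and then set $\delta=-\log\sup_H\|\rho_H(\varphi)\|_{\mathrm{op}}$. The task is thus to control, uniformly in $H$, how close to $\mathbf 1_G$ the representation $L^2(H\bs G)$ can get.

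The first reduction is to semisimple subgroups. Given a proper closed reductive $H$, let $S=S(H)$ be the product of the non‑compact simple factors of $[H^\circ,H^\circ]$; it is a connected semisimple subgroup of $G$ without compact factors, and $H/S$ is amenable (it has a subnormal series with compact, abelian and finite quotients). Hence $S$ is co‑amenable in $H$, i.e. $\mathbf 1_H\prec L^2(H/S)=\mathrm{Ind}_S^H\mathbf 1$; inducing to $G$ and using continuity of weak containment under induction,
\[
L^2(H\bs G)=\mathrm{Ind}_H^G\mathbf 1\ \prec\ \mathrm{Ind}_H^G L^2(H/S)=\mathrm{Ind}_S^G\mathbf 1=L^2(S\bs G),
\]
so $\|\rho_H(\varphi)\|_{\mathrm{op}}\le\|\rho_S(\varphi)\|_{\mathrm{op}}$. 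Since the fixed group $G$ has only finitely many conjugacy classes of connected semisimple subgroups, and $S(H)\subsetneq G$ (as $S(H)\subseteq H\subsetneq G$), it is enough to bound $\|\rho_S(\varphi)\|_{\mathrm{op}}$ for $S$ running over a finite set of representatives $S_0=\{1\},S_1,\dots,S_r$ of conjugacy classes of \emph{proper connected semisimple subgroups of $G$ without compact factors}; then $e^{-\delta}:=\max_j\|\rho_{S_j}(\varphi)\|_{\mathrm{op}}$ will do.

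It remains to check, for each $j$, that $\mathbf 1_G$ is not weakly contained in $L^2(S_j\bs G)$, i.e. that $S_j$ is not co‑amenable in $G$; by the characterisation above this forces $\|\rho_{S_j}(\varphi)\|_{\mathrm{op}}<1$. For $S_0=\{1\}$ this is $\mathbf 1_G\not\prec\lambda_G$, which holds because $G$ is non‑amenable. Now suppose first that $G$ is simple. If $G$ has Kazhdan's property (T), co‑amenability of $S_j$ would give $\mathbf 1_G\subset L^2(S_j\bs G)$, hence a finite $G$‑invariant measure on $S_j\bs G$, impossible for a proper connected semisimple subgroup. If $G$ does not have property (T) it is locally isomorphic to $\SO(n,1)$ or $\SU(n,1)$; then $S_j$ has real rank one and, up to conjugacy, acts with a totally geodesic orbit, so it is a standard $\SO(m,1)$ or $\SU(m,1)$ (finitely many possibilities), for which the Plancherel decomposition of $L^2(S_j\bs G)$ is explicitly known (Faraut, Strichartz, Molchanov, Oshima--Matsuki, \dots): its continuous part consists of tempered spherical principal series and its discrete part of finitely many discrete series of $G$, with no complementary series accumulating at $\mathbf 1_G$. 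Finally, for general semisimple $G=\prod_iG_i$: co‑amenability passes to quotients, so co‑amenability of $S_j$ would force each projection to a simple factor $G_i$ to be co‑amenable, hence $=G_i$ by the simple case; then $S_j$ surjects onto every factor, and a Goursat‑type analysis of its Lie algebra shows that for some pair of (isomorphic) factors $G_i,G_{i'}$ the intersection $S_j\cap(G_i\times G_{i'})$ is either a twisted‑diagonal copy of $G_i$ or a finite group, so that $\mathrm{Res}_{G_i\times G_{i'}}L^2(S_j\bs G)$ (computed by Mackey) does not weakly contain $\mathbf 1_{G_i\times G_{i'}}$ — a contradiction.

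Putting this together, $\sup_H\|\rho_H(\varphi)\|_{\mathrm{op}}\le\max_{0\le j\le r}\|\rho_{S_j}(\varphi)\|_{\mathrm{op}}=e^{-\delta}<1$, which is the lemma. I expect the one genuinely non‑formal ingredient to be the last one: any failure of a uniform gap would have to come from the rank‑one spaces $S_j\bs G$ above, and this is exactly what their harmonic analysis excludes — whereas the co‑amenability reduction, the finiteness of conjugacy classes of semisimple subgroups, and the property‑(T) case are all soft. If one wants a fully self‑contained treatment, the main obstacle is to give a direct spectral‑gap argument for $L^2$ of $\SO(n,1)/\SO(m,1)$, $\SU(n,1)/\SU(m,1)$ and $\SU(n,1)/\SO(m,1)$ rather than quoting the Plancherel formula.
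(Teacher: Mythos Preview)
Your argument is correct in outline and shares the same bifurcation as the paper's proof: property~(T) handles most simple factors, and the rank--one groups $\SO(n,1)$, $\SU(n,1)$ require separate treatment. The differences are in how each half is executed.

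For the reduction step, the paper simply asserts that one may pass to simple factors of $G$ and then discards the compact factors of $H$; you instead run a co--amenability reduction to the semisimple part $S(H)$, invoke finiteness of conjugacy classes of connected semisimple subgroups, and handle non--simple $G$ at the end via a Goursat argument. Your route is more explicit, though your Goursat step is phrased in terms of the \emph{intersection} $S_j\cap(G_i\times G_{i'})$ where you really want the \emph{projection} $p_{i,i'}(S_j)$: once all one--factor projections are surjective but $S_j$ is proper, some two--factor projection is proper and hence (by Goursat for a product of two simples) a twisted diagonal; co--amenability of $S_j$ in $G$ passes to co--amenability of $p_{i,i'}(S_j)$ in $G_i\times G_{i'}$, contradicting the fact that $L^2((G_i\times G_{i'})/\Delta)\cong L^2(G_i)$ as a two--sided regular representation does not weakly contain $\mathbf 1$. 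Your Mackey/intersection formulation would instead require controlling $(G_i\times G_{i'})\cap gS_jg^{-1}$ for \emph{all} double--coset representatives $g$, which you do not do.

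The more substantive difference is in the rank--one analysis. You quote the Plancherel decomposition of $L^2(S_j\bs G)$ for the relevant pairs. The paper takes a more elementary detour: pick a uniform lattice $\Lambda\subset H$, embed $L^2(H\bs G)\hookrightarrow L^2(\Lambda\bs G)$, use bi--$K$--invariance of $\varphi$ to reduce the operator norm to the bottom of the Laplace spectrum $\lambda_0(\Lambda\bs X)$, and read off $\lambda_0$ from the Elstrodt--Patterson--Sullivan formula in terms of the critical exponent $\delta(\Lambda)$, which equals $m$, $m$, $2m$ in the three cases $(\SO(n,1),\SO(m,1))$, $(\SU(n,1),\SO(m,1))$, $(\SU(n,1),\SU(m,1))$. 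This sidesteps the full Plancherel machinery and makes the uniformity of the gap over $m<n$ explicit --- precisely the self--contained argument you flag as desirable in your final sentence.
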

\begin{proof}
$G$ decomposes as a product of simple groups $G=G_1\times\ldots\times G_n$. Since $H$ is a centralizer, we will have a similar decomposition $H=H_1\times\ldots\times H_n$, where $H_n$ is a centralizer of a semi-simple element in $G_i$ and at least one of $H_i$'s is a proper subgroup of $G_i$. It is clear that the lemma will follow once we prove it for each factor separately. We can therefore assume that $G$ is a simple non-compact Lie group. Choose any function $\varphi$ satisfying the assumption of the lemma. Let $M$ be the operator of the right convolution by $\varphi$. We need to show that $\|M\|_{L^2(H\bs G)}\leq e^{-\delta}$, i.e. that $\|M\|_{L^2(H\bs G)}$ is uniformly bounded away from $1$. Suppose $G$ has Kazhdan's property (T) and consider the unitary representation $\pi=\bigoplus_{H}L^2(H\bs G)$, where $H$ runs over the conjugacy classes of proper reductive subgroups of $G$. This representation has no fixed vectors, so by property (T) it has no almost invariant vectors. Since the support of $\varphi$ generates $G$, we deduce that $\|M\|_\pi=\sup_{\|v\|=1}\langle v,\pi(\varphi)v\rangle <1.$ 

It remains to treat the cases of non-compact simple Lie groups without property (T). These are, up to isogeny, $G = \SO(n,1),\SU(n,1),n\geq 2$ (see \cite[3.5.4]{BekkaValette}). The compact factors of $H$ do not affect the norm of the convolution by $\varphi$, so we can assume that the pair $(G,H)$ is (i) $(\SO(n,1),\SO(m,1)), 0<m<n$, (ii) $(\SU(n,1),\SO(m,1)), 0<m\leq n$ or (iii) $(\SU(n,1),\SU(m,1))$, with $0<m<n$. 
Let $\Lambda$ be a uniform lattice in $H$. Since $L^2(H\bs G)$ is a sub-representation of $L^2(\Lambda\bs G)$, it is enough to show that $M$ has a uniform spectral gap on $L^2(\Lambda\bs G)$. The function $\varphi$ is bi--$K$-invariant so this will follow from the uniform spectral gap for the Laplace operator acting on $L^2(\Lambda\bs X)$. Let $\lambda_0(\Lambda\bs X)$ be the lowest eigenvalue of the Laplace operator acting on $L^2(\Lambda\bs X)$. Let $\delta(\Lambda)$ be the critical exponent of $\Lambda$ in $G$, it is equal to $m,m,2m$ is cases (i),(ii),(iii) respectively. By \cite[Thm 4.2]{Cor1} (see also \cite{Els1, Els2, Els3, Pat1, Sul1}) we have 
\[
\lambda_0(\Lambda\bs X)=\begin{cases} \frac{1}{4}(\dim \partial X)^2 & \text{ if }0\leq \delta(\Lambda)\leq \frac{1}{2}\dim \partial X,\\
\delta(\Lambda)(\dim\partial X-\delta(\Lambda)) &\text{ if } \frac{1}{2}\dim\partial X< \delta(\Lambda)\leq \dim\partial X.
\end{cases}
\]
It follows that $\lambda_0(\Lambda\bs X)$ is bounded away from $0$ uniformly in $m$. The lemma is proved,
\end{proof}

We are ready to prove Theorem \ref{thm-UniformGrowth}. 
\begin{proof}[Proof Theorem \ref{thm-UniformGrowth}]
It is enough to prove that 
\begin{equation}\label{eqn:1step} \mathcal O(\gamma, 1_{B(R)}) \leq e^{-2\delta} \mathcal O(\gamma, 1_{B(R + 2)}),\end{equation}
for every $R\geq 0$. The theorem will follow by induction and the fact that  $\mathcal O(\gamma, 1_{B(R)})$ is non-decreasing in $R$. 

Consider the function $\psi_R\colon G_\gamma\bs G\to \RR_{\geq 0}$, given by $\psi_R(g)=1_{\B(R)}(g^{-1}\gamma g).$ Alternatively, we can describe $\psi_R$ as the characteristic function of the set 
\[\{G_\gamma g\in G_\gamma\bs G\,|\, d(g,\gamma g)\leq R\}, \] where $d\colon G\times G\to\mathbb R_{\geq 0}$ is the left $G$-invariant, right $K$-invariant semi-metric defined in Section \ref{sec-injrad}. The map $\Gamma_\gamma g\mapsto g^{-1}\gamma g$ is proper, so $\psi_R$ is compactly supported and we have \[\mathcal O(\gamma, 1_{\B(R)})=\|\psi_R\|_2^2.\] 
Observe that for any $g\in G_\gamma\bs G$ such that $d(g,\gamma g)\leq R$ and any $h\in B(1)$ we have 
\[d(gh,\gamma gh)\leq d(g,gh)+d(\gamma g,g)+d(\gamma gh,\gamma g)\leq R+2.\]
Let $\varphi$ be the function from the Lemma \ref{lem:UnifSG}. Since ${\rm supp\,}\varphi\subset \B(1)$, the above inequality implies that ${\rm supp\,}(\psi_R\ast \varphi)\subset {\rm supp\,}(\psi_{R+2}).$ In particular, $\psi_R\ast \varphi(g)=\psi_R\ast \varphi(g)\psi_{R+2}(g)$ for every $g\in G_\gamma \bs G$. By the Cauchy-Schwartz inequality we get that
\[
\|\psi_R\ast \varphi\|_2^2 \, \| \psi_{R+2}\|_2^2\geq \left(\int_{G_\gamma\bs G} \psi_R\ast \varphi(g)dg\right)^2
\]
Since $G$ preserves the measure on $G/G_\gamma$ and $\int_G f(g)dg=1$ we have that $\int_{G_\gamma\bs G} \psi_R\ast \varphi(g)dg = \int_{G_\gamma \bs G} \psi_R(g)dg$. As in addition $\int_{G_\gamma\bs G} \psi_R(g) dg = \mathcal O(\gamma, 1_{B(R)})$,  we obtain that 
\[
\|\psi_R\ast \varphi\|_2^2 \, \mathcal O(\gamma, 1_{\B(R+2)}) \geq \mathcal O(\gamma, 1_{B(R)})^2. 
\]
By Lemma \ref{lem:UnifSG} we have $\|\psi_R\ast \varphi\|_2^2\leq e^{-2\delta}\|\psi_R\|_2^2=e^{-2\delta}\mathcal O(\gamma, 1_{\B(R)})$. From this and the inequality above we infer that \[e^{-2\delta}\mathcal O(\gamma, 1_{\B(R+2)})\geq \mathcal O(\gamma, 1_{\B(R)}).\] This establishes (\ref{eqn:1step}).
\end{proof}


\section{Volume of the thin part}

Let $\Gamma$ be a lattice in a semisimple Lie group $G$ and let $R > 0$. For any $\gamma \in \Gamma, \gamma \not= \mathrm{Id}$, we define the following subset of $G$:
\[
S^{R}_{\gamma} := \{ g \in G  \text{ such that } d(g^{-1}\gamma g, 1) < R \},
\]
We note that if $\Gamma$ is uniform the volume of $\Gamma_\gamma \bs S_\gamma^R$ is always finite. We use the arithmetic Margulis lemma (Theorem \ref{AML}) to prove the following fact. 

\begin{proposition}\label{small intersection}
  There exists $\eps, m > 0$ only depending on $G$ such that if $\Gamma$ is a uniform irreducible arithmetic lattice in $G$ with trace field $k$, then every $g$ in $G$ lies in at most $[k:\QQ]^m$ sets of the form $S^{\eps [k:\QQ]}_\gamma$ where $\gamma \in \Gamma$. 
\end{proposition}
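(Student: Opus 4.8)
The plan is to fix $g\in G$, set $x=gK\in X$, and note that $g$ lies in $S^{\eps d}_\gamma$ precisely when $d(x,\gamma x)<\eps d$; so the quantity to bound is the cardinality of $F:=\{\gamma\in\Gamma:d(x,\gamma x)<\eps d\}$. First I would enlarge $d_0$ so that $d\ge d_0$ forces $\Gamma$ to be uniform; this is legitimate because, as recalled in the proof of Theorem \ref{AML}, the degrees of trace fields of non-uniform arithmetic lattices in $G$ are bounded in terms of $G$ only. Next, choosing $\eps\le\varepsilon_G$ (the constant of Theorem \ref{AML}), the set $F$ is contained in $\{\gamma\in\Gamma:d(x,\gamma x)\le\varepsilon_G d\}$, so by Theorem \ref{AML} and Remark \ref{AML_uniform_lattice} the group $N:=\langle F\rangle$ is virtually abelian.

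The second step is to pin down the structure of $N$. Let $\mathbf T$ be the identity component of the Zariski closure of $N$. It is a connected solvable $k$-subgroup of $\G$, hence abelian by Lemma \ref{lem-anisolvable} (here $\G$ is $k$-anisotropic because $\Gamma$ is uniform), and since $\G$ has no nontrivial $k$-unipotent subgroup, $\mathbf T$ is in fact a $k$-anisotropic torus. The group $N$ normalises $\mathbf T$, so it maps to the finite group $N_\G(\mathbf T)/Z_\G(\mathbf T)$, whose order is bounded in terms of $G$ only (it is a finite subgroup of $\mathrm{GL}(X^*(\mathbf T))$, of rank at most that of $\G$). The kernel $N_1$ of this map lies in $Z_\G(\mathbf T)(k)$ and, modulo the central torus $\mathbf T$, projects to a finite subgroup of $(Z_\G(\mathbf T)/\mathbf T)(k)$ — finite because its Zariski closure is the image of $\overline N^\circ=\mathbf T$, which is trivial. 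A finite subgroup of the $k$-points of a linear algebraic group of bounded dimension has order $\le d^{O_G(1)}$: this follows from Jordan's theorem on finite linear groups together with the fact that a number field of degree $d$ contains only $d^{O(1)}$ roots of unity. Hence $A:=N_1\cap\mathbf T(k)$ is an abelian subgroup of $N$ with $[N:A]\le d^{O_G(1)}$.

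The third step reduces the count over $F$ to a count over $A$: the set $F$ meets at most $[N:A]$ cosets of $A$, and if $\gamma_0\in F$ lies in a coset $\gamma_0A$ then $\gamma_0^{-1}(F\cap\gamma_0A)\subseteq\{a\in A:d(x,ax)<2\eps d\}$ by the triangle inequality. It remains to bound $\#\{a\in A:d(x,ax)<2\eps d\}$. The (finitely generated) group $A$ lies in the anisotropic $k$-torus $\mathbf T$ and acts on $X$ by commuting semisimple isometries; there is a $\mathbf T(\RR)$-invariant totally geodesic flat $E\subset X$ and a homomorphism $v\colon A\to E$ (the translation part along $E$) with $\ell(a)\asymp_G\|v(a)\|$, whose kernel is the subgroup of elliptic elements of $A$, finite of order $\le d^{O_G(1)}$ by the remark above. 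By Proposition \ref{dobrowolski_bound} and Proposition \ref{p-MahlerDisp}, every non-compact $a\in A$ satisfies $\ell(a)\gg_G(\log d)^{-3}$, so $v(A)$ is a lattice in $E$ whose shortest nonzero vector has length $\gg_G(\log d)^{-3}$. Since $\ell(a)\le d(x,ax)$, the set $\{a\in A:d(x,ax)<2\eps d\}$ maps, with fibres of size $\le d^{O_G(1)}$, into the lattice points of $v(A)$ in the ball of radius $O_G(\eps d)$, and a standard successive-minima estimate bounds the latter by $\bigl(1+O_G(\eps d(\log d)^3)\bigr)^{\dim E}$. As $\dim E\le\rk(G)$ and $(\log d)^3\le d$ for $d\ge d_0(G)$, this is $\le d^{O_G(1)}$. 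Collecting the bounds, $\#F\le[N:A]\cdot d^{O_G(1)}\le d^m$ for an exponent $m=m(G)$ and all $d\ge d_0(G)$, with $\eps=\varepsilon_G$, which is the assertion.

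The step I expect to be the main obstacle is the structural one: controlling $[N:A]$ and the torsion of $A$ \emph{polynomially} in $d$. The cyclotomic examples show that some dependence on $d$ is genuinely unavoidable, so one must verify carefully (via Jordan's theorem and a count of roots of unity) that it is only polynomial; one must also set up the homomorphism $v$ and the comparison $\ell(a)\asymp\|v(a)\|$ cleanly for a family of commuting semisimple isometries so that Dobrowolski's lower bound feeds into a lattice-point count. The remaining parts — the reduction to the uniform case, the passage from $S^{\eps d}_\gamma$ to $F$, and the coset bookkeeping — are routine.
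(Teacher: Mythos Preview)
Your proof is correct and follows essentially the same strategy as the paper's: invoke the arithmetic Margulis lemma to make $\langle F\rangle$ virtually abelian (after reducing to the uniform case), pass to an abelian subgroup of controlled index, bound the torsion by a roots-of-unity count and the free part by Dobrowolski's lower bound together with a lattice-point/packing estimate on the flat stabilised by a containing torus. The only notable difference is the structural step: the paper uses Lemma~\ref{bdd_index_abelian} to produce an abelian subgroup of index bounded by a constant depending only on $G$, whereas you obtain one of index $d^{O_G(1)}$ via the Zariski closure and Jordan's theorem; both suffice since the target bound is $d^m$.
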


For the proof of this proposition we will also use the following consequence of Jordan's theorem on linear groups. 

\begin{lemma} \label{bdd_index_abelian}
  For every $n>0$ there exists an integer $A>0$ with the following property. Let $\Delta$ be a finitely generated subgroup of $\mathrm{GL}_n(\CC)$ which contains an abelian subgroup of finite index consisting of semisimple elements. Then, $\Delta$ has an abelian subgroup of index at most $A$. 
\end{lemma}

\begin{remark}
One can recover this result by combining \cite[10.11]{Wehrfritz} (or \cite{platonov1968some}) and \cite[3.6]{Wehrfritz} under the assumption that $\Delta$ consist only of semisimple elements. The former implies that $\Delta$ has a solvable finite index subgroup $\Delta'$ of index at most $O(n)$. By \cite[3.6]{Wehrfritz} we can find a $O(n)$-index subgroup $\Delta''\subset \Delta'$ which can be conjugated inside the subgroup upper triangular matrices. We can then conclude that $\Delta''$ is abelian , since $[\Delta'',\Delta'']$ consists only of unipotent elements and we assumed that all element of $\Delta$ are semisimple.  
\end{remark}

\begin{proof}
  Let $H_1$ be a semi-simple normal abelian torsion-free finite-index subgroup of $\Delta$. The group  $H_1$ can be conjugated into the group $D$ of diagonal matrices, so we may assume that $H_1 = \Delta \cap D$. In particular, $\Delta$ normalises the connected component of the Zariski closure $\bf T$ of $H_1$, which is a torus. It follows that $\Delta/H_1$ embeds into a product of groups of the form $S_{j_l} \wr \mathrm{PGL}_{i_l}(\CC)$ with $\sum_l i_l j_l = n$. Indeed, the quotient of the normaliser of $\bf T$  by $\bf T$ is a direct product of groups of this form. In particular, $\Delta/H_1$ is linear group of degree at most $n^2$ (since $\mathrm{PGL}_i$ has a linear representation of degree at most $i^2$). By Jordan's theorem , it follows that there exists a subgroup $H_1 \subset H_2 \subset \Delta$ such that $H_2/H_1$ is abelian and $|\Delta/H_2|$ is bounded by a constant $A_1$ depending only on $n$. 
  
  Let $H_3$ be the subgroup of $\Delta$ acting trivially by conjugation on $H_1$. The quotient $\Delta/H_3$ acts freely on $H_1$, so it embeds into the automorphism group of the character module $X^*({\bf T})$. The rank of $\bf T$ is bounded by $n$, so  $\Delta/H_3$ is isomorphic to a finite subgroup of $\GL_n(\ZZ)$. Therefore, $|\Delta/H_3|$ is bounded by a constant $A_2$ depending only on $n$. 
  
  Now the group $H_4:=H_2 \cap H_3$ contains $H_1$ and has index at most $A_1 A_2$ in $\Delta$. By construction $H_1\to H_4 \to H_4/H_1$ is a central extension of an abelian torsion free group by a torsion abelian group. We will prove that it is abelian, which finishes the proof of our lemma. Our argument relies on the following (probably well-known) fact: 
  
  \begin{lemma} A central extension of a torsion-free abelian group by a torsion abelian group is abelian.
  \end{lemma}

\begin{proof}

let $A \to B \to C$ be such an extension and choose any $c_1, c_2 \in C$ and lifts $b_1, b_2$ to $B$. We need only prove that $b_1$ and $b_2$ commute with each other. To do so, we note that the commutator $[b_1, b_2] = b_1b_2b_1^{-1}b_2^{-1}$ belongs to $A$ and depends only on $c_1, c_2$ and not on the choice of lifts. Moreover if $c_3 \in C$ and $b_3$ is a lift to $B$ we have:
  \begin{align*}
  [b_1, b_2b_3] &= b_1b_2b_3b_1^{-1}b_3^{-1}b_2^{-1} \\
  &= b_1b_2b_1^{-1} [b_1, b_3] b_2^{-1} \\
  &= [b_1, b_2] |b_1, b_3] 
  \end{align*}
  where the last line follows since $[b_1, b_3] \in A$ which is central in $B$. So the map $(b_1, b_2) \mapsto [b_1, b_2]$ is a $\ZZ$-bilinear map from $C$ to $A$, which has to be trivial since $C$ is torsion and $A$ is torsion-free. This means that $B$ is abelian. 
    
\end{proof}
 
\end{proof}

\begin{proof}[Proof of Proposition \ref{small intersection}]
  Let $\eps$ be the constant $\eps_G$ given by the Theorem \ref{AML} and $d = [k:\QQ]$. Let 
  \begin{align*}
    F &= \{ \gamma \in \Gamma :\: g \in S^{\eps d}_{\gamma} \} \\
    &= \{\gamma \in \Gamma :\: d_G(g^{-1}\gamma_i g, 1) < \eps d \}
  \end{align*}
  Let $\Delta = g\langle F\rangle g^{-1}$. We have 
  \begin{align*}
  |F| =: N &= |\{ \gamma \in \Delta :\: d_G(\gamma, 1) \le \eps d  \}| \\
  &= |\{ \gamma \in \Delta :\: d_X(\gamma x_0, x_0) \le \eps d  \}| 
  \end{align*}
  (where $x_0$ is the identity coset in $X = G/K$). We need to estimate $N$. For this we first observe that by Theorem \ref{AML} and Lemma \ref{bdd_index_abelian} there is an abelian subgroup $H$ of $\Delta$ with $[\Delta : H] \le A$. Let $L$ be a maximal free abelian subgroup of $H$.  Since $L \subset \Gamma$ which is a uniform lattice and $L$ is torsion-free, $L$ can contain only non-elliptic semisimple elements. By the Flat Torus Theorem \cite[Theorem 7.1 in Chapter II]{BriHa} there exists a symmetric subspace $Y \subset X$ and a flat subspace $E \subset X$ (of dimension rank $L$) such that the product $Y \times E$ is embedded in $X$, preserved by $L$ which acts trivially on the $Y$ factor and cocompactly on the $E$ factor. In particular, $L$ preserves the flat subspace $E \subset X$ itself. 
  
  Let $p$ be the geodesic projection from $X$ to $E$. It decreases distances, so 
  \[
  N \le |\{ \gamma \in \Delta :\: d_E(p(\gamma x_0), p(x_0)) \le \eps d  \}|. 
  \]
  Moreover, $\Delta$ normalizes a finite-index subgroup of $L$ so it preserves $E$. Therefore,  $p(\gamma x_0) = \gamma p(x_0)$ for any $\gamma \in \Delta$ and 
  \[
  N \le |\{ \gamma \in \Delta :\: d_E(\gamma x_1, x_1) \le \eps d  \}|
  \]
  for $x_1 := p(x_0) \in E$. 

  Let $\pi\colon\Delta\to \mathrm{Isom}(E)$ be natural map induced by the action of $\Delta$ on $E$. Let $T$ be the torsion subgroup of $H$; then as $L$ acts faithfully on $E$ we have that 
  \begin{equation} \label{ker_action}
    |\ker(\pi)| \le [\Delta:L] = |T| \cdot [\Delta : H]. 
  \end{equation}
  Let $\rho\colon \G\to \GL_n$, $n=\dim(G)$ be the adjoint representation. The group $T$ is abelian and consists only of torsion elements so $\rho(T)$ can be simultaneously diagonalized over the algebraic closure. Moreover, the diagonal entries are the adjoint eigenvalues of elements of $T$. As such, they are algebraic integers of degree at most $\dim G$ over $k$. The degree of the $D$-th cyclotomic polynomial over $\QQ$ is equal to $\phi(D) \ge \sqrt D$, so a root of unity which has degree $\dim(G) \cdot [k:\QQ]$ over $\QQ$ is of order at most $\dim(G)^2 [k:\QQ]^2$. In this way we can restrict the number of possible values on the diagonal entries by $C_0 [k:\QQ]^{m_0}$ for some $C_0,m_0$ dependent only on $G$. We deduce a crude (but sufficient) upper bound on the cardinality of $T$:
  \begin{equation} \label{tors_subgroup}
    |T| \le C_1 d^{m_1}
  \end{equation}
  with $C_1, m_1$ depending only on $G$.  

  If follows from \eqref{ker_action} and \eqref{tors_subgroup} that we have
  \begin{equation} \label{reduc_to_euc}
  N \le A \cdot C_1 d^{m_1} \cdot |\{ \gamma \in \pi(\Delta) :\: d_E(\gamma x, x) \le \eps d\}|. 
  \end{equation}
  We now use the following classical lemma, which we will prove after concluding our main argument. 

  \begin{lemma} \label{euclidean}
    Let $E$ be a Euclidean space, $\Lambda$ a lattice in $\mathrm{Isom}(E)$ and $x \in E$. If $\ell$ is the smallest length of a vector in the translation lattice of $\Lambda$ then for any $R \ge \ell$ we have 
    \[
    \{ g \in \Lambda :\: d_E(gx, x) \le R \} \le C_e \frac {R^{\dim(E)}}{\min\{1, \ell^{\dim(E)}\}}
    \]
    where $C_e$ is a constant depending only on $\dim(E)$. 
  \end{lemma}

  To conclude we observe that the translation lattice in $\pi(\Delta)$ is $\pi(L)$, and by Proposition \ref{dobrowolski_bound_injrad} we have that $\ell \ge \tfrac c{\log(d)^3}$. It then follows from Lemma \ref{euclidean} that  
  \[
  |\{ \gamma \in \pi(\Delta) :\: d_E(\gamma x, x) \le \eps d\}| \le \frac{C_e}c \cdot \log(d)^{3\dim(E)} \cdot (\eps d)^{\dim(E)}
  \]
  which implies that 
  \[
  |\{ \gamma \in \pi(\Delta) :\: d_E(\gamma x, x) \le \eps d\}| \le C_2 d^{m_2}
  \]
  for some $m_2, C_2$ depending only on $G$. Together with \eqref{reduc_to_euc} this finishes the proof of the proposition. 
\end{proof}

\begin{proof}[Proof of Lemma \ref{euclidean}]
  Let $r = \dim(E)$, let $L$ denote the translation lattice of $\Lambda$ and let $v_1, \ldots, v_r$ be the successive length minima for $L$. Let $D$ be the parallelepiped on $v_1, \ldots, v_r$ centered on $x$. Then $D$ is a fundamental domain for $L$ acting on $E$, so it contains at least one point from each orbit of $L$ on $\Lambda x$. Moreover, the index $[\Lambda:L]$ is bounded by a constant depending only on the dimension as follows from Bieberbach's theorem, so we need only to estimate
  \[
  |\{ \gamma \in L : \gamma D \cap B_E(x, R) \not= \emptyset \}|. 
  \]
  We will prove that if $\gamma D \cap B_E(x, R)$ is nonempty then $\gamma D \cap B_E(x, R+1)$ has volume at least $c_e \ell^r$ where $c_e$ is a constant depending only on the dimension; the lemma follows immediately by a packing argument. 

  For this it suffices to give a lower bound for the volume of $D \cap B_E(y, 1)$ where $y \in D$. We note that the simplex $S$ on the vectors $\tfrac {\eps_i v_i}{2\max(1,\|v_i\|)}$ based at $y$ lies entirely in $D \cap B_E(y, 1)$ (where $\eps_i$ are some signs). The volume of this simplex is equal to 
  \[
  \vol(S) = \frac 1{2^r r!} \prod_{i=1}^r \frac 1{\max(1,\|v_i\|)} \det(v_1, \ldots, v_r) = \frac 1{2^r r!} \cdot \frac{\vol(D)}{\prod_{i :\: \|v_i\| \ge 1} \|v_i\|}. 
  \]
  Now by Minkowski's second theorem we have that $\vol(D) \ge k_e \prod_{i=1}^r \|v_i\|$ (for some $k_e$ depending only on the dimension), so it follows that 
  \[
  \vol(S) \ge \frac{k_e}{2^r r!} \prod_{i:\: \|v_i\| \le 1} \|v_i\| \ge c_e \min(1, \ell)^r 
  \]
  where $c_e = k_e/2^r r!$, which finishes the proof of the lemma. 
\end{proof}

\begin{corollary} \label{tubes_vs_thin}
Let $\Gamma$ be an irreducible arithmetic lattice in $G$, $M = \Gamma \bs X$ and $k$ the trace field of $\Gamma$. For any $R \le \eps[k:\QQ]$ we have 
\[
\sum_{[\gamma] \subset \Gamma} \vol\left(\Gamma_\gamma \bs S_\gamma^R \right) \ge \vol(M_{\le R}) \ge \frac 1{[k:\QQ]^m} \sum_{[\gamma] \subset \Gamma} \vol\left(\Gamma_\gamma \bs S_\gamma^R \right)
\]
where the sum is over nontrivial conjugacy classes in $\Gamma$. 
\end{corollary}

\begin{proof}
Let $\Phi$ be the natural map 
\[
\Phi : \bigcup_{[\gamma] \subset \Gamma} \Gamma_\gamma \bs S_\gamma^R \to \Gamma \bs X = M. 
\]
We have that $M_{\le R}$ is the image of $\Phi$, and the volume measure on $M$ is locally the pushforward of the Haar measure on the right. Thus the left-hand inequality is immediate, and to prove the right-hand one it suffices to prove that $\Phi$ is at most $[k:\QQ]^m$-to-one. 

Fix $\gamma \in \Gamma \setminus\{1\}$ and $g \in S_\gamma^R$. If $\Phi(\Gamma_\theta g) = \Phi(\Gamma_\gamma h)$ for some $h\in S_\theta^R$ with $\theta,\in \Gamma \setminus \{1\}$ then $g$ lies in $S_{\eta\theta\eta^{-1}}^R$ for some $\eta \in \Gamma$, and by Proposition \ref{small intersection} the number of $\theta$ and for which this can happen is at most $[k:\QQ]^m$. 
\end{proof}

We can now prove the main result of this section, which will imply almost immediately Theorem \ref{Main_thinpart}. 

\begin{theorem}\label{thm-ThinPartsComparison}
  Let $G$ be a semisimple Lie group with the associated symmetric space $X$. There are constants $C, \delta, \eps, m > 0$ depending only on $G$ with the following property. Let $\Gamma\subset G$ be an irreducible arithmetic lattice with trace field $k$. For any $R_1, R_2 \geq 0$ with $R_1 + R_2 \leq \eps [k:\QQ]$ we have:
  \[
  \vol((\Gamma\bs X)_{\leq R_1}) \leq C e^{-\delta R_2}[k:\QQ]^m  \vol((\Gamma\bs X) _{\leq R_1 + R_2}).
  \]
\end{theorem}

\begin{proof}
By definition of $S_\gamma^R$ we have, for any $R >0$ and semisimple $\gamma \in \Gamma$, that
\[
\vol(\Gamma_\gamma\bs S_\gamma^{R})=\int_{\Gamma_\gamma\bs G}1_{\B(R)}(g^{-1}\gamma g)dg=\vol(\Gamma_\gamma\bs G_\gamma)O(\gamma,1_{\B(R)}).
\]
With Theorem \ref{thm-UniformGrowth} if follows that for any $R_1, R_2 > 0$ we have
\[
\vol(\Gamma_\gamma\bs S_\gamma^{R_1+R_2}) C e^{-\delta R_2} \ge  \vol(\Gamma_\gamma\bs S_\gamma^{R_1}) 
\]
and by Corollary \ref{tubes_vs_thin} it follows that 
\begin{align*}
  \vol(M_{\le R_1+R_2}) & \ge \frac 1{[k:\QQ]^m} \sum_ {[\gamma] \subset \Gamma} \vol(\Gamma_\gamma\bs S_\gamma^{R_1+R_2}) \\
  &\ge \frac 1{[k:\QQ]^m} \sum_ {[\gamma] \subset \Gamma} C e^{\delta R_2} \vol(\Gamma_\gamma\bs S_\gamma^{R_1}) \\
  &\ge \frac {C e^{\delta R_2}}{[k:\QQ]^m} \vol(M_{\le R_1}). 
\end{align*}
The last inequality proves the theorem. 
\end{proof}

\subsection{Proof of Theorem \ref{Main_thinpart}}
Let $\eta:=\eps/2$, $R_1=R_2=\eta[k:\QQ]$. We have $\vol((\Gamma\bs X) _{\leq R_1 + R_2}) \le \vol(\Gamma\bs X)$ so Theorem \ref{thm-ThinPartsComparison} yields the conclusion of Theorem \ref{Main_thinpart} in the case of uniform lattices. For non-uniform lattices the statement is empty, as the degree $[k:\QQ]$ is bounded. 


\section{Homotopy type}

Suppose $M$ is a manifold locally isometric to $X$ (i.e. $M=\Gamma\bs X$). For any $x \in M$ we denote
\[
\eps_x = \min(1, \InjRad_M(x)).
\]
In this section we will prove the following result.

\begin{proposition} \label{simplicial_general}
  Let $X$ be a symmetric space of non-compact type without Euclidean factors. There exists $A, B$ depending only on $X$ such that any compact $X$-manifold is homotopy equivalent to a simplicial complex with at most
  \[
  A \int_M \eps_x^{-\dim(X)} dx
  \]
  vertices and where every vertex belongs to at most $B$ simplices. 
\end{proposition}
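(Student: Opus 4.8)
The strategy is the classical one used by Gelander: build a cover of $M$ by metric balls whose nerve is homotopy equivalent to $M$, and control the number of balls and the maximal multiplicity. The key point is that the radii of the balls must be adapted to the local injectivity radius, and the whole content of the proposition is the bookkeeping showing that the number of balls one needs is $O\!\left(\int_M \eps_x^{-\dim X}\,dx\right)$ with the multiplicity bounded in terms of $X$ only.

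\medskip

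First I would fix a small constant $\rho>0$, depending only on $X$ (a fraction of the Margulis constant of $X$ will do), so that for every $x\in M$ the ball $B(x,\rho\,\eps_x)$ is, after lifting to $X$, a convex (hence contractible) subset and so that such balls of comparable centers intersect in contractible sets. One then chooses a maximal set of points $\{x_i\}$ in $M$ that is ``$\eps$-separated relative to the injectivity radius'': concretely, a maximal family such that $d(x_i,x_j)\ge \tfrac12\rho\min(\eps_{x_i},\eps_{x_j})$ for $i\ne j$. Maximality gives that the balls $B(x_i,\rho\,\eps_{x_i})$ cover $M$; separation together with a Vitali-type argument gives that the balls $B(x_i,\tfrac14\rho\,\eps_{x_i})$ are pairwise disjoint, so that
\[
\#\{i\} \le \frac{1}{\inf_i \vol B\!\left(x_i,\tfrac14\rho\,\eps_{x_i}\right)/\vol B(x_i,\dots)}\cdots
\]
— more usefully, since on the ball $B(x_i,\tfrac14\rho\,\eps_{x_i})$ the injectivity radius is comparable to $\eps_{x_i}$ (the function $\InjRad_M$ is $1$-Lipschitz) and volumes of metric balls of radius $\le 1$ in $X$ are comparable to the Euclidean volume $\sim r^{\dim X}$, we get $\vol B(x_i,\tfrac14\rho\,\eps_{x_i})\gg \eps_{x_i}^{\dim X}$, hence
\[
\#\{i\}\;\ll\; \sum_i \eps_{x_i}^{-\dim X}\,\vol B\!\left(x_i,\tfrac14\rho\,\eps_{x_i}\right)\;\ll\; \int_M \eps_x^{-\dim X}\,dx ,
\]
the last step because the disjoint balls $B(x_i,\tfrac14\rho\,\eps_{x_i})$ have, on each of them, $\eps_x$ comparable to $\eps_{x_i}$. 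This yields the bound on the number of vertices with $A=A(X)$.

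\medskip

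Next I would establish the bounded-multiplicity and homotopy-equivalence statements. For the multiplicity: if the balls $B(x_{i_0},\rho\,\eps_{x_{i_0}}),\dots,B(x_{i_N},\rho\,\eps_{x_{i_N}})$ all contain a common point $y$, then all the centers lie within bounded multiples of $\eps_y$ of $y$ (using again that $\InjRad_M$ is $1$-Lipschitz so all the $\eps_{x_{i_j}}$ are comparable to $\eps_y$), while they are $\gg\eps_y$-separated from one another; lifting to the ball $B(y,C\eps_y)\subset X$, which is bi-Lipschitz to a Euclidean ball with constants depending only on $X$, a packing argument in $X$ bounds $N$ by a constant $B=B(X)$. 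This simultaneously bounds the multiplicity of the cover and hence the number of simplices incident to any vertex of the nerve. For the homotopy equivalence one invokes the nerve lemma: the cover $\{B(x_i,\rho\,\eps_{x_i})\}$ is a \emph{good} cover — every nonempty finite intersection is, after lifting to $X$ (which is legitimate because the radii are small enough relative to injectivity radii that each such intersection lifts homeomorphically), an intersection of convex subsets of the Hadamard manifold $X$, hence convex or empty, hence contractible. The nerve lemma then gives $M\simeq \mathcal N$.

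\medskip

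The main obstacle, and the place requiring genuine care rather than routine estimation, is the interplay between the varying radii and the ``goodness'' of the cover: one must choose $\rho$ small enough (depending on the Margulis constant and on a lower bound for the convexity radius of $X$ as a function of the injectivity radius of $M$ at the point — here the absence of Euclidean factors, so that $X$ has strictly negative directions controlling convexity, or at least is a Hadamard manifold with a uniform comparison, is used) that for \emph{every} relevant tuple of centers the lift to $X$ exists and the intersection is convex, while simultaneously $\rho$ must be large enough that the balls of radius $\rho\,\eps_{x_i}$ still cover. Making these two requirements compatible uniformly over all compact $X$-manifolds, with constants depending on $X$ alone, is the technical heart; once it is in place the counting and packing arguments above are standard. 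I expect the cleanest route is to first prove everything for a single ball $B(y,C\eps_y)$ lifted isometrically to $X$, reducing all statements to elementary Euclidean-comparison geometry in the symmetric space.
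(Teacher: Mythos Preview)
Your approach is essentially the same as the paper's: choose a maximal set of centers separated relative to $\eps_x$, apply the nerve lemma to the resulting cover by small balls (whose intersections lift to convex subsets of the Hadamard manifold $X$), and bound both the number of centers and the multiplicity by packing/volume comparison. The paper's cardinality estimate is organized a little differently---it avoids claiming that the shrunken balls are pairwise disjoint (which does not quite follow from your $\min$-separation condition without first invoking the $1$-Lipschitz property of $\InjRad_M$) and instead uses an averaging argument with auxiliary functions---but this is a cosmetic difference, not a genuine divergence in strategy.
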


Gelander's conjecture follows immediately from this and Theorem \ref{Main_thinpart}. 

\begin{proof}[Proof of Theorem \ref{Gelander_conj}] Indeed, let $M = \Gamma\bs X$ with $\Gamma$ a uniform, torsion-free arithmetic lattice in $G$. Then by the proposition we have a homotopy equivalence between $M$ and a simplicial complex with at most $N$ vertices where
\begin{align*}
N &\ll_X \int_M \eps_x^{-\dim(X)} dx \\
  &\le \InjRad(M)^{-\dim(X)} \vol(M_{\le 1}) + \vol(M) \\
  &\ll_X \vol(M)\left( e^{-c[k:\QQ]}\log[k:\QQ]^{3\dim(X)} + 1 \right) 
\end{align*}
where the last line is deduced using Dobrowolski's bound (Proposition \ref{dobrowolski_bound_injrad}) and Theorem \ref{Main_thinpart}. The term $e^{-c[k:\QQ]}\log[k:\QQ]^{-3\dim(X)}$ is bounded independently of $[k:\QQ]$ so this proves that $|S| = O(\vol(M))$ and finishes the proof of Theorem \ref{Gelander_conj}.
\end{proof}

\medskip

Before proving Proposition \ref{simplicial_general} we need a few preliminaries. We define, for $r \in [1, +\infty[$: 
\[
C_X(r) = \sup_{1/2 > t > 0} \frac {\vol_M (B_X(x_0, rt))} {\vol_M (B_X(x_0, t))}
\]
where $x_0$ is an arbitrary point in $X$. We say that a subset in a metric space is $\eta$-separated if all distances between distinct points are $> \eta$. The constant $C_X(r)$ will appear through the following packing lemma which we will use repeatedly in our arguments below. 

\begin{lemma} \label{packing}
  Let $M$ be an $X$-manifold, $0 < \eta < \delta < 1/2$ and $P$ an $\eta$-separated subset of $B_M(x, \delta)$. Then, $|P| < C_X(\delta/\eta)$. 
\end{lemma}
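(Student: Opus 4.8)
The plan is to lift the separated set $P$ to the universal cover $X$ and apply a volume-packing argument there. First I would fix a point $\tilde x \in X$ lying over $x$, and use the fact that $M$ is a manifold (so the covering map $X \to M$ is a local isometry) to lift $P$ to a subset $\tilde P$ of the ball $B_X(\tilde x, \delta)$: concretely, for each $p \in P$ choose a geodesic in $M$ from $x$ to $p$ of length $d_M(x,p) < \delta$ and lift it starting at $\tilde x$; its endpoint $\tilde p$ satisfies $d_X(\tilde x, \tilde p) = d_M(x,p) < \delta$. Here I use that $\delta < 1/2 \le \InjRad_M(x)$ is \emph{not} assumed — rather, I should be careful: the lemma as stated does not assume $\delta$ is below the injectivity radius, so distinct points of $P$ could lift to the same point or to points that are closer in $X$ than in $M$. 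However, since the covering map does not increase distances, we always have $d_X(\tilde p_1, \tilde p_2) \ge d_M(p_1, p_2) > \eta$ is \emph{false} in general; instead $d_X \le d_M$. So the correct statement is that the lift is distance-\emph{decreasing}, which goes the wrong way.

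Let me reconsider: the right approach is to keep everything downstairs in $M$ and use metric balls in $M$ directly, comparing their volumes to Euclidean-type balls via $C_X$. Since $P$ is $\eta$-separated, the balls $B_M(p, \eta/2)$ for $p \in P$ are pairwise disjoint. Each such ball is contained in $B_M(x, \delta + \eta/2) \subset B_M(x, 2\delta)$ (using $\eta < \delta$), so summing volumes gives $|P| \cdot \min_{p} \vol_M(B_M(p,\eta/2)) \le \vol_M(B_M(x, 2\delta))$. The key point is then a lower bound on $\vol_M(B_M(p, \eta/2))$ in terms of $\vol_M(B_M(x, 2\delta))$ and the ratio $\delta/\eta$. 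This is exactly where $C_X$ enters: because $X$ has non-positive curvature, the volume of a ball in $M$ of radius $s$ is at least the volume of a ball of radius $s$ in $X$ (by the Günther--Bishop volume comparison, balls in $M$ are covered by balls in $X$, but for a lower bound one uses that the exponential map at $p$ is volume-non-decreasing in non-positive curvature — this is the Günther inequality). Hence $\vol_M(B_M(p,\eta/2)) \ge \vol_X(B_X(x_0, \eta/2))$, while $\vol_M(B_M(x, 2\delta)) \le$ ... here I'd want an \emph{upper} bound, which requires more care since $M$ is only locally $X$.

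The cleanest route, and the one I expect the authors take, is: the balls $B_M(p, \eta/2)$, $p \in P$, are disjoint and all contained in $B_M(x, \delta+\eta/2)$, so
\[
|P| \cdot \min_{p \in P} \vol_M(B_M(p, \eta/2)) \le \vol_M(B_M(x, \delta + \eta/2)).
\]
Now one uses non-positive curvature twice: first, $\vol_M(B_M(p, \eta/2)) \ge \vol_X(B_X(x_0, \eta/2))$ by the Günther volume comparison (valid for any radius since $X$ has no conjugate points); second, to bound the numerator one covers $B_M(x, \delta + \eta/2)$ by a single ball in $X$ — but that needs an injectivity hypothesis. Rather than fight this, I would instead directly estimate the numerator in $M$ by comparing to $X$ from above via Bishop--Gromov (which in non-positive curvature gives $\vol_M(B_M(x,s)) \le \vol_X(B_X(x_0, s)) \cdot (\text{something})$ — actually Bishop--Gromov upper bounds require a \emph{lower} Ricci bound, which we do have since $X$ is symmetric). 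Assembling, $|P| \le \vol_X(B_X(x_0, \delta+\eta/2)) / \vol_X(B_X(x_0, \eta/2)) \le C_X\big((\delta + \eta/2)/(\eta/2)\big) \le C_X(\delta/\eta \cdot 3) $, and since $\delta/\eta > 1$ one absorbs the constant $3$ into the definition or re-derives with the stated $C_X(\delta/\eta)$ by being slightly more careful with the containment radius.

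\textbf{Main obstacle.} The real subtlety is matching the specific constant $C_X(\delta/\eta)$ in the statement: one must arrange the containments so that the ratio of radii appearing is exactly $\delta/\eta$ (or reinterpret $C_X$ with a harmless multiplicative slack), and one must justify the passage between volumes of balls in $M$ and in $X$. Both directions of this comparison hold because $X$ is a symmetric space of non-compact type (non-positive sectional curvature gives the Günther lower bound with no conjugate-point restriction, and the two-sided curvature bounds plus homogeneity give the Bishop--Gromov upper bound), but one has to be careful that the comparison ball radii stay in the range $[1,+\infty)$ where $C_X$ is defined — which holds since $\delta/\eta > 1$. I expect the write-up to simply invoke disjointness of the $\eta/2$-balls together with these standard comparison estimates and read off the bound.
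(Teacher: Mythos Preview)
Your first approach is correct, and you abandoned it because of a sign error. The covering map $\pi : X \to M$ is $1$-Lipschitz, i.e.\ $d_M(\pi(a),\pi(b)) \le d_X(a,b)$ for all $a,b \in X$. Hence for \emph{any} lifts $\tilde p_1, \tilde p_2$ of $p_1,p_2 \in P$ one has $d_X(\tilde p_1,\tilde p_2) \ge d_M(p_1,p_2) > \eta$, not the reverse. So lifting $P$ along geodesics from $x$ gives an $\eta$-separated set $\tilde P \subset B_X(\tilde x,\delta)$ with $|\tilde P| = |P|$; now disjointness of the balls $B_X(\tilde p,\eta/2)$ and homogeneity of $X$ give the packing bound. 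This is the standard argument, and is almost certainly what the authors have in mind (they write ``The proof is omitted'').

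Your second approach, working directly in $M$, has the difficulties you yourself flagged. In fact both volume comparisons go the wrong way for you: since $B_M(p,s) = \pi(B_X(\tilde p,s))$, one has $\vol_M(B_M(p,s)) \le \vol_X(B_X(x_0,s))$, so you cannot bound $\vol_M(B_M(p,\eta/2))$ from below by the $X$-volume without an injectivity radius assumption, and there is no clean upper bound on $\vol_M(B_M(x,2\delta))$ either. The lift-and-pack route sidesteps all of this.

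As for matching the exact constant $C_X(\delta/\eta)$: the packing in $X$ naively gives $|P| \le \vol_X(B_X(x_0,\delta+\eta/2))/\vol_X(B_X(x_0,\eta/2))$, which corresponds to $C_X(2\delta/\eta+1)$ rather than $C_X(\delta/\eta)$. In the paper the lemma is only ever applied with specific numerical ratios to produce constants depending on $X$ alone, so this harmless slack does not matter.
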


The proof is omitted. The main ingredient for the proof of Proposition \ref{simplicial_general} is the following lemma. 

\begin{lemma} \label{goodballs}
  Let $M$ be an $X$-manifold. 
  There exists a subset $S \subset M$ such that :
  \begin{enumerate}
  \item \label{dense} For every $x \in M$ there is a $p \in S$ such that $x \in B_M(p, \eps_p/6)$ ;

  \item \label{separated} If $p, q \in S$ and $p \not= q$ then $d(p, q) > \max(\eps_p, \eps_q)/36$.
  \end{enumerate}
  Moreover such a set $|S|$ satisfies 
  \begin{equation} \label{cardinality}
    |S| \ll_X \int_M \eps_x^{-\dim(X)} dx. 
  \end{equation}
\end{lemma}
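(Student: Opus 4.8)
The statement is a standard "Vitali-type" construction, so I would build $S$ greedily and then verify the three properties. The key point is that the separation and density radii are measured against the \emph{local} scale $\eps_p = \min(1, \InjRad_M(p))$, which is $1$-Lipschitz in $p$ (injectivity radius on a locally symmetric space is $1$-Lipschitz, so $\eps_x$ is as well). This Lipschitz control is what makes the greedy argument close up.

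First I would construct $S$. Consider the family $\mathcal F$ of all subsets $T\subset M$ that are "$\eps$-separated at scale $1/36$" in the sense that $d(p,q)>\max(\eps_p,\eps_q)/36$ for all distinct $p,q\in T$. Order $\mathcal F$ by inclusion; since $M$ is compact and the separation condition is closed under nested unions (any two points of a union lie in some member of the chain), Zorn's lemma — or simply a maximality argument using compactness — produces a maximal element $S\in\mathcal F$. By construction $S$ satisfies property \eqref{separated}. For property \eqref{dense}: suppose some $x\in M$ lies in no ball $B_M(p,\eps_p/6)$, $p\in S$. I claim $S\cup\{x\}\in\mathcal F$, contradicting maximality. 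Indeed for $p\in S$ we have $d(x,p)\ge \eps_p/6$; using the $1$-Lipschitz bound $|\eps_x-\eps_p|\le d(x,p)$ one checks that $\eps_x\le \eps_p + d(x,p) \le 6d(x,p)+d(x,p)$, hence $d(x,p)\ge \eps_x/7 > \max(\eps_x,\eps_p)/36$ after a short computation (adjusting the constants; the factor $6$ vs.\ $36$ leaves ample room — one wants $d(x,p)/\eps_p\ge 1/6$ to force $d(x,p)/\max(\eps_x,\eps_p)\ge 1/36$, and indeed $\eps_x\le 7\,d(x,p)$ gives $d(x,p)\ge\eps_x/7$, while $d(x,p)\ge\eps_p/6$ directly). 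So the claim holds and maximality forces property \eqref{dense}.

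Next, the cardinality bound \eqref{cardinality}. The standard trick: the balls $\{B_M(p,\eps_p/72)\}_{p\in S}$ are pairwise disjoint. To see this, suppose $z\in B_M(p,\eps_p/72)\cap B_M(q,\eps_q/72)$ with $p\ne q$; WLOG $\eps_p\ge\eps_q$, then $d(p,q)\le \eps_p/72+\eps_q/72\le \eps_p/36=\max(\eps_p,\eps_q)/36$, contradicting \eqref{separated}. Now each such ball has $\eps_p/72<1\le \InjRad$ only when... — more carefully, on the ball $B_M(x,\eps_x/72)$ the injectivity radius is comparable to $\eps_x$ (again by the $1$-Lipschitz property, $\InjRad_M(y)\ge \InjRad_M(x)-d(x,y)\ge$ a fixed fraction of $\eps_x$ there, using $\eps_x\le 1$), so $\vol_M(B_M(p,\eps_p/72))\asymp_X \eps_p^{\dim X}$ by comparison with a ball of that radius in $X$ (here one uses that $X$ has no Euclidean factors only indirectly; what is really needed is the two-sided volume comparison for small balls, which holds in any symmetric space of noncompact type with constants depending on $X$). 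Meanwhile the function $\eps_x^{-\dim X}$ is, up to a bounded factor, constant $\asymp \eps_p^{-\dim X}$ on $B_M(p,\eps_p/72)$. Therefore
\[
\int_M \eps_x^{-\dim X}\,dx \;\ge\; \sum_{p\in S}\int_{B_M(p,\eps_p/72)} \eps_x^{-\dim X}\,dx \;\gg_X\; \sum_{p\in S} \eps_p^{-\dim X}\cdot \eps_p^{\dim X} \;=\; |S|,
\]
which is \eqref{cardinality}.

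\textbf{The main obstacle.} The delicate point is not the combinatorics of the greedy construction but the two volume comparisons: lower-bounding $\vol_M(B_M(p,\eps_p/72))$ and showing $\eps_x$ is comparable to $\eps_p$ on that ball. Both rest on the fact that $\InjRad_M$ is $1$-Lipschitz together with the local comparison between balls in $M$ and balls in $X$ of radius below the injectivity radius — I would cite the curvature bounds on $X$ (it is nonpositively curved with curvature bounded below, constants depending on $X$) to get $\vol_M(B_M(p,r))\asymp_X r^{\dim X}$ for $r\le \eps_p\le 1$. Everything else is bookkeeping with the explicit constants $6,36,72$, which are generous enough that no sharp estimate is needed.
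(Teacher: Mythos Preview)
Your proof is correct. The construction via a maximal separated set is exactly what the paper does (it simply asserts that any maximal set satisfying \eqref{separated} also satisfies \eqref{dense}; you supply the verification via the Lipschitz property of $\eps_x$, which the paper omits). For the cardinality bound you and the paper diverge: you observe that the balls $B_M(p,\eps_p/72)$ are pairwise \emph{disjoint} and each contributes $\gg_X 1$ to $\int_M \eps_x^{-\dim X}\,dx$, giving the estimate in one stroke. The paper instead works with the larger, overlapping balls $B_M(p,\eps_p/4)$ and controls the \emph{multiplicity} of the overlap: it introduces auxiliary functions $F_p,H_p$ and uses the packing lemma (Lemma~\ref{packing}) to show that for each $x$ at most $C_X(36)$ of the $H_p(x)$ are nonzero, whence $\sum_p F_p(x)\ll_X \vol(B_M(x,\eps_x/2))^{-1}$ pointwise, and then integrates. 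Your disjointness argument is a bit more direct; the paper's bounded-overlap framework is closer in spirit to the nerve-of-a-cover degree estimate used immediately afterwards in the proof of Proposition~\ref{simplicial_general}.
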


\begin{proof}
  Such a set is given by any maximal subset which satisfies condition \eqref{separated} so we have only to prove \eqref{cardinality}. For $p \in S$ we define
  \[
  F_p(x) = \begin{cases} \vol_M (B_M(p, \eps_p/2))^{-1} \text{ if } x \in B_M(p, \eps_p/4) \\ 0 \text{ otherwise} \end{cases}
  \]
  and
  \[
  H_p(x) = \begin{cases} \vol_M(B_M(x, \eps_x/2))^{-1} \text{ if } p \in B_M(x, \eps_x/2) \\ 0 \text{ otherwise.} \end{cases} 
  \]
  Now if $x \in B_M(p, \eps_p/4)$ then $\eps_x > \eps_p/2$ (since the ball of radius $\eps_p/2$ around $x$ is contained in the ball of radius $\eps_p$ around $p$ and the latter is embedded). This implies that
  \[
  F_p(x) \le H_p(x) \cdot \frac {\vol_M (B_M(p, \eps_p))} {\vol_M (B_M(p, \eps_p/2))} \le C_X(2) H_p(x)
  \]
  for all $p, x$. 

  On the other hand, if $p \in B_M(x, \eps_x/2)$, then $\eps_p \ge \eps_x/2$, so that the subset $\{q \in S : H_q(x) \not= 0 \}$ is $\eps_x/72$-separated by \eqref{separated}. By Lemma \ref{packing} implies the inequality
  \[
  |\{q \in S : H_q(x) \not= 0 \}| \le C_X(36).
  \]
  So, we get that
  \[
  \sum_{p \in S} F_p(x) \le C_X(2) \sum_{p \in S} H_p(x) \le C_X(2)C_X(36) \vol_M(B_M(x, \eps_x/2))^{-1}
  \]
  for all $x \in M$. Now we have $\int_M F_p(x) dx = 1$, so we get
  \[
  |S|  = \int_M \sum_{p \in S} F_p(x) dx \le C_X(2)C_X(36) \int_M \vol_M(B_M(x, \eps_x/2))^{-1} dx. 
  \]
  On the other hand $\vol B_M(x, \eps_x/2) \asymp_X \eps_x^{-\dim(X)}$ since $\eps_x \le 1$, and \eqref{cardinality} follows. 
\end{proof}

\begin{proof} [Proof of Proposition \ref{simplicial_general}]
  We construct a simplicial complex by the same arguments as in \cite{Gelander1}, \cite[10.1]{Fraczyk}. A {\em good cover} of a topological space $Z$ is a collection $\mathcal U = \{U_1, \ldots, U_n\}$  of open subsets of $Z$ such that $Z = \bigcup_{i=1}^N U_i$ and all the nonempty intersections between the $U_i$ are contractible. If $\mathcal U$ is any collection of subsets of $Z$, its {\em nerve} $N(\mathcal U)$ is the simplicial complex with vertex set $\mathcal U$ and $U_0, \ldots, U_m$ is an $m$-simplex whenever $U_0 \cap \cdots \cap U_m$ is nonempty. By \cite[Theorem 13.4]{BotT}, if $\mathcal U$ is a good cover of $Z$ then $N(\mathcal U)$ is homotopy equivalent to $Z$. 

  Let $M$ be a $X$-manifold. Pick $S \subset M$ as in Lemma \ref{goodballs}. It follows from \eqref{dense} that the balls $B_M(p, \eps_p/6), p \in S$ are a cover of $M$, and each of them is contractible since $\eps_p/6 < \InjRad_M(p)$. Moreover, if $p, q \in S$ then $B_M(p, \eps_p/6) \cap B_M(q, \eps_q/6)$ is connected: assume otherwise and that $\eps_q \le \eps_p$. Choose a lift $\tilde q$ of $q$ to $X$. Then $B_X(\tilde q, \eps_q/6)$ intersects two distinct lifts of $B_M(p, \eps_p/6)$, with centers $\tilde p_1, \tilde p_2$. We have
  \[
  d_X(\tilde p_1, \tilde p_2) \le \frac{\eps_p}6 + 2\frac{\eps_q}6 + \frac{\eps_p}6 \le \frac{2\eps_p}3
  \]
  which contradicts the fact that $\eps_p \le \InjRad_M(p)$. 

  It follows that all the intersections between the balls $B(p, \eps_p/6), p \in S$ are isometric to intersections of balls of $X$, which are contractible. We conclude that $\mathcal U = \{B_M(p, \eps_p/6):\: p \in S\}$ is a good cover of $M$, so $M$ is homotopy equivalent to the simplicial complex $N(\mathcal U)$ which has at most $|S|$ vertices and by \eqref{cardinality} this implies the first part of the proposition. 

  It remains to bound the number of simplices incident to any vertex. To do this it suffices to bound the degree of a vertex $p$ in the 1-skeleton of the nerve of the cover $B_M(p, \eps_p/6), p \in S$. This follows from the same argument as above: if $q \in B_M(p, \eps_p/6)$ then $\eps_q > 5\eps_p/6$, so all points in $S \cap B_M(p, \eps_p/6)$ are $5\eps_p/216$-separated and applying Lemma \ref{packing} it follows that $B_M(p, \eps_p/6)$ contains at most $C_M(36/5)$ points of $S$. 
\end{proof}


\subsection{Triangulations} \label{delaunay}

Though it is unnecessary for most applications, it is natural to ask whether it is possible to replace ``homotopy equivalent'' by ``homeomorphic'' in the statement of Theorem \ref{Gelander_conj} or Proposition \ref{simplicial_general}. If $M$ has constant curvature (i.e. if $M$ is hyperbolic) then a set of points such as the one used in the argument explained above can also be used to construct a ``Delaunay triangulation'' of $M$ under a mild genericity assumption which we can always arrange for the sets constructed for the proof of the theorem. In variable curvature it is much more complicated to construct Delaunay triangulations but this was achieved in \cite{BDG_trig}; in this reference the authors describe an algorithm that, staring from a set of points such as that used for the proof of Gelander's conjecture, outputs a modified set of points from which a Delaunay triangulation can be constructed by patching local Euclidean triangulations together, and then show that it is homeomorphic to the manifold by using center of mass maps from the simplices to the manifold. Unfortunately in this reference the authors use nets with constant separation, which cannot be applied to our situation since the delicate case is exactly when the injectivity radius might go to 0. Their construction works in general as proven in their later work  \cite{BDGW}. 


\subsection{Lower bounds for the minimal number of simplices} \label{lowbd_simpl}

The bounds given by Theorem \ref{Gelander_conj} are qualitatively sharp, as shown by the following proposition whose proof follows an argument shown to us by Nir Lazarovich for hyperbolic manifolds, which generalises using the straightening construction of Lafont and Schmidt \cite{Lafont_Schmidt}\footnote{Note that an argument using simplicial volume as a black box does not seem to work here as it is not clear how to control the representation of a fundamental class in a simplicial complex homotopy equivalent to a closed manifold. }. A similar result for word-hyperbolic groups is proven in \cite{Lazarovich_complexity}. 

\begin{proposition}
  Let $X$ be a symmetric space of non-compact type without Euclidean factors. There exists a constant $c_X > 0$ such that for any closed manifold $M$ locally isometric to $X$ and any simplicial complex $C$ which is homotopy equivalent to $X$, $C$ has at least $c_X\vol(M)$ simplices of dimension $\dim(X)$. 
\end{proposition}

\begin{proof}
  We assume that $X$ is not isometric to $\HH^2$ (for which the result follows immediately from Gauss-Bonnet). Let $n = \dim(X)$. Let $f : C \to M$ be a homotopy equivalence. We may assume that $f$ is a simplicial map to some triangulation of $M$. Under these conditions it is immediate that the restriction of $f$ to $n$-simplices is surjective: indeed, let $\gamma$ be a simplicial $n$-chain in $C$ representing the fundamental class (mod 2 if $M$ is non-orientable) of $M$; then the union of the $f$-images of the simplices in $\gamma$ must cover $M$. Let $Y$ be the universal cover of the $n$-skeleton $C^{(n)}$ and let $\tilde f : Y \to X$ be the lift of $f$, which is a surjective, $\Gamma$-equivariant map ($\Gamma = \pi_1(M)$). 

  Let $s$ be the straightening map on singular simplices of $X$ given by \cite[Definition p.~ 136]{Lafont_Schmidt} (see \cite{Bucher} for the case where $X = \SL_3(\RR)/\SO(3)$). If $\Delta$ is a $n$-simplex of $Y$ then $\tilde f|_\Delta$ is a singular simplex of $X$ so we can define a map $\tilde g_\Delta = s(f|_\Delta)$. Since $s$ is a $\Gamma$-equivariant chain map the $\tilde g_\Delta$ glue together to give a surjective $\Gamma$-equivariant map $\tilde g : Y \to X$. Let $g : C^{(n)} \to M$ be the quotient map; it is surjective and since the siplicies are straight the volume of the image of an $n$-simplex of $C$ is bounded above by a constant $1/c$ depending only on $X$ (see property (d) in \cite[pp.~132-133]{Lafont_Schmidt}). Because of surjectivity of $g$ the total volume of the images of these simplices must be at least $\vol(M)$, hence there are at least $c\vol(M)$ of them. 
\end{proof}

It may still be possible to construct CW-complexes with few cells which are homotopy equivalent to locally symmetric space of large volume. For instance, in \cite{ABFG} the first author together with M.~Abért, N.~Bergeron and D.~Gaboriau construct CW-complexes which are homotopy equivalent to congruence covers of certain locally symmetric spaces and have relatively few cells of low dimensions compared to the degree.

\bibliographystyle{plain}
\bibliography{bib}

\begin{thebibliography}{10}

\bibitem{ABFG}
Miklos Abert, Nicolas Bergeron, Miko{\l}aj Fr{\c{a}}czyk, and Damien Gaboriau.
\newblock On homology torsion growth, 2025.

\bibitem{Bader_Gelander_Sauer}
Uri {Bader}, Tsachik {Gelander}, and Roman {Sauer}.
\newblock {Homology and homotopy complexity in negative curvature}.
\newblock {\em {J. Eur. Math. Soc. (JEMS)}}, 22(8):2537--2571, 2020.

\bibitem{BallGro}
Werner Ballmann.
\newblock Manifolds of non positive curvature.
\newblock In {\em Arbeitstagung Bonn 1984}, pages 261--268. Springer, 1985.

\bibitem{BekkaValette}
Bachir Bekka, Pierre de~La~Harpe, and Alain Valette.
\newblock {\em Kazhdan's property (T)}.
\newblock Cambridge university press, 2008.

\bibitem{BDG_trig}
Jean-Daniel {Boissonnat}, Ramsay {Dyer}, and Arijit {Ghosh}.
\newblock {Delaunay triangulation of manifolds}.
\newblock {\em {Found. Comput. Math.}}, 18(2):399--431, 2018.

\bibitem{BDGW}
Jean-Daniel Boissonnat, Ramsay Dyer, Arijit Ghosh, and Mathijs Wintraecken.
\newblock Local criteria for triangulating general manifolds, 2023.

\bibitem{BorelBook}
Armand Borel.
\newblock {\em Linear algebraic groups}, volume 126.
\newblock Springer Science \& Business Media, 2012.

\bibitem{BHC62}
Armand Borel and Harish-Chandra.
\newblock Arithmetic subgroups of algebraic groups.
\newblock {\em Ann. of Math. (2)}, 75:485--535, 1962.

\bibitem{Borel_Prasad}
Armand Borel and Gopal Prasad.
\newblock Finiteness theorems for discrete subgroups of bounded covolume in
  semi-simple groups.
\newblock {\em Publications Math\'ematiques de l'IH\'ES}, 69:119--171, 1989.

\bibitem{BotT}
R.~Bott and L.W. Tu.
\newblock {\em Differential Forms in Algebraic Topology}.
\newblock Graduate Texts in Mathematics. Springer New York, 1995.

\bibitem{Breuillard}
Emmanuel Breuillard.
\newblock A height gap theorem for finite subsets of {${\rm GL}\sb
  d(\overline{\mathbb Q})$} and nonamenable subgroups.
\newblock {\em Ann. of Math. (2)}, 174(2):1057--1110, 2011.

\bibitem{BriHa}
Martin~R. Bridson and Andr{\'e} Haefliger.
\newblock {\em Metric spaces of non-positive curvature}, volume 319 of {\em
  Grundlehren Math. Wiss.}
\newblock Berlin: Springer, 1999.

\bibitem{Bucher}
Michelle Bucher-Karlsson.
\newblock Simplicial volume of locally symmetric spaces covered by
  {{\(\mathrm{SL}_{3} \mathbb R/\mathrm{SO}(3)\)}}.
\newblock {\em Geom. Dedicata}, 125:203--224, 2007.

\bibitem{chen2021height}
Lvzhou Chen, Sebastian Hurtado, and Homin Lee.
\newblock A height gap in {{\(\mathrm{GL}_d(\overline{\mathbb{Q}})\)}} and
  almost laws.
\newblock {\em Groups Geom. Dyn.}, 19(3):899--912, 2025.

\bibitem{Cor1}
Kevin Corlette.
\newblock Hausdorff dimensions of limit sets i.
\newblock {\em Inventiones mathematicae}, 102(1):521--541, 1990.

\bibitem{Dobro79}
E.~Dobrowolski.
\newblock On a question of lehmer and the number of irreducible factors of a
  polynomial.
\newblock {\em Acta Arith.}, 34(4):391--401, 1979.

\bibitem{Els1}
J{\"u}rgen Elstrodt.
\newblock Die resolvente zum eigenwertproblem der automorphen formen in der
  hyperbolischen ebene. teil i.
\newblock {\em Mathematische Annalen}, 203(4):295--330, 1973.

\bibitem{Els2}
J{\"u}rgen Elstrodt.
\newblock Die resolvente zum eigenwertproblem der automorphen formen in der
  hyperbolischen ebene. teil ii.
\newblock {\em Mathematische Zeitschrift}, 132(2):99--134, 1973.

\bibitem{Els3}
J{\"u}rgen Elstrodt.
\newblock Die resolvente zum eigenwertproblem der automorphen formen in der
  hyperbolischen ebene. teil iii.
\newblock {\em Mathematische Annalen}, 208(2):99--132, 1974.

\bibitem{fisher2022new}
David Fisher and Sebastian Hurtado.
\newblock A new proof of finiteness of maximal arithmetic reflection groups.
\newblock {\em Ann. Henri Lebesgue}, 6:151--159, 2023.

\bibitem{Fraczyk}
M.~Fr{\c a}czyk.
\newblock Strong limit multiplicity for arithmetic hyperbolic surfaces and
  3-manifolds.
\newblock {\em Invent. math.}, 2020.

\bibitem{frachurtraim}
Mikolaj Fraczyk, Sebastian Hurtado, and Jean Raimbault.
\newblock {B}enjamini--{S}chramm convergence of arithmetic locally symmetric
  spaces.
\newblock {\em In preparation}, 2023.

\bibitem{Gelander1}
Tsachik Gelander.
\newblock Homotopy type and volume of locally symmetric manifolds.
\newblock {\em Duke Math. J.}, 124(3):459--515, 2004.

\bibitem{Gelander2}
Tsachik Gelander.
\newblock Volume versus rank of lattices.
\newblock {\em Journal fur die reine und angewandte Mathematik (Crelles
  Journal)}, 2011(661):237--248, 2011.

\bibitem{gelander2021bounds}
Tsachik Gelander and Paul Vollrath.
\newblock Bounds on systoles and homotopy complexity.
\newblock {\em arXiv preprint arXiv:2106.10677}, 2021.

\bibitem{GelaVol}
Tsachik Gelander and Paul Vollrath.
\newblock Bounds on {Systoles} and {Homotopy} {Complexity}.
\newblock Preprint, {arXiv}:2106.10677 [math.{GR}] (2021), 2021.

\bibitem{Lafont_Schmidt}
Jean-Fran\c{c}ois {Lafont} and Benjamin {Schmidt}.
\newblock {Simplicial volume of closed locally symmetric spaces of non-compact
  type}.
\newblock {\em {Acta Math.}}, 197(1):129--143, 2006.

\bibitem{Lapan_Linowitz_Meyer}
Sara Lapan, Benjamin Linowitz, and Jeffrey~S. Meyer.
\newblock Systole inequalities up congruence towers for arithmetic locally
  symmetric spaces.
\newblock {\em Commun. Anal. Geom.}, 31(4):847--878, 2023.

\bibitem{Lazarovich_complexity}
Nir Lazarovich.
\newblock Finite index rigidity of hyperbolic groups, 2023.

\bibitem{Pat1}
S.~J. Patterson.
\newblock The limit set of a {F}uchsian group.
\newblock {\em Acta Math.}, 136(3-4):241--273, 1976.

\bibitem{platonov1968some}
Vladimir~Petrovich Platonov.
\newblock Some remarks on linear groups.
\newblock {\em Mathematical notes of the Academy of Sciences of the USSR},
  4(6):873--874, 1968.

\bibitem{raimbault2022coxeter}
Jean Raimbault.
\newblock Coxeter polytopes and {B}enjamini--{S}chramm convergence.
\newblock {\em Bulletin de la SMF}, 151, 2024.

\bibitem{Sul1}
Dennis Sullivan.
\newblock Related aspects of positivity in {R}iemannian geometry.
\newblock {\em J. Differential Geom.}, 25(3):327--351, 1987.

\bibitem{Varadarajan}
V.~S. Varadarajan.
\newblock {\em Harmonic analysis on real reductive groups}, volume 576 of {\em
  Lect. Notes Math.}
\newblock Springer, Cham, 1977.

\bibitem{Vinberg_trace}
E.~B. {Vinberg}.
\newblock {Rings of definition of dense subgroups of semisimple linear groups}.
\newblock {\em {Math. USSR, Izv.}}, 5:45--55, 1972.

\bibitem{Wehrfritz}
Bertram Wehrfritz.
\newblock {\em Infinite linear groups: an account of the group-theoretic
  properties of infinite groups of matrices}, volume~76.
\newblock Springer Science \& Business Media, 2012.

\end{thebibliography}
\end{document}